\theoremstyle{plain}
\newtheorem{theorem}{Theorem}[section]
\newtheorem{prop}[theorem]{Proposition}
\newtheorem{lem}[theorem]{Lemma}
\theoremstyle{definition}
\newtheorem{defi}[theorem]{Definition}
\newtheorem{exmp}[theorem]{Example}
\def\Gr{{\rm{Gr}}}
\def\dim{{\rm{dim}\,}}
\def\<{\left<}
\def\>{\right>}
\def\d{{\partial}}
\def\k{\mathbf{k}}
\def\kQ{\k Q}
\def\ens#1{\left\{ #1 \right\}}
\def\fl{{\longrightarrow}\,}
\def\CC{{\mathcal{C}}}
\def\A{{\mathbf{A}}}
\def\C{{\mathbf{C}}}
\def\T{{\mathbf{T}}}
\def\Q{{\mathbb{Q}}}
\def\Z{{\mathbb{Z}}}
\def\1{\mathbbm{1}}
\def\c{{\mathbf c}}
\def\x{{\mathbf x}}
\def\e{{\mathbf e}}
\def\f{{\mathbf f}}
\def\g{{\mathbf g}}
\def\v{{\mathbf v}}
\def\add{{\rm{add}}\,}
\def\ens#1{\left\{ #1 \right\}}
\def\Ext{{\rm{Ext}}}
\def\Hom{{\rm{Hom}}}
\def\Ob{{\rm{Ob}}}
\def\modd{{\textrm{mod-}\,}}
\def\reg{{\textrm{reg-}\,}}
\def\rr{{\textrm{reg}^0\textrm{-}\,}}
\def\1{\mathbbm{1}}
\def\Ax#1{\textbf{(A#1)}}
\def\SM{(S,M)}
\newcommand{\subsubsubsection}[1]{\emph{ #1}. }
\newcommand{\dd}{{\mathbf d}}
\title[Atomic bases in cluster algebras of types $A$ and $\widetilde A$]{Atomic bases in cluster algebras of types $A$ and $\widetilde A$} 
\author{Gr\'egoire Dupont and Hugh Thomas}
\address{Universit\'e de Sherbrooke, Sherbrooke QC, Canada}
\email{gregoire.dupont@usherbrooke.ca}
\address{University of New Brunswick, Fredericton NB, Canada}
\email{hthomas@unb.ca}
\date{\today}
\subjclass[2010]{13F60, 16G20, 05A15}
\begin{document}

\begin{abstract}
	We give explicit atomic bases of arbitrary coefficient-free cluster algebras of types $A$ and $\widetilde A$.
\end{abstract}

\maketitle

\setcounter{tocdepth}{1}
\tableofcontents

\section{Introduction and main results}
	\subsection{Cluster algebras}
		Cluster algebras were introduced by Fomin and Zelevinsky in the early 2000's in order to provide a combinatorial framework for studying total positivity and dual canonical bases in semisimple groups \cite{cluster1}. Since then, cluster algebras have shown interactions with various areas of mathematics like combinatorics, Lie theory, Poisson geometry, Teichm\"uller theory, mathematical physics and representation theory.

		A (skew-symmetric) cluster algebra is defined from a \emph{seed}, that is a pair $(Q,\x)$ where $Q$ is a finite connected quiver with $n$ vertices and without oriented cycles of length $l \leq 2$ and where $\x = (x_1, \ldots, x_n)$ is a $n$-tuple of variables, called the \emph{cluster} of the seed. A combinatorial process, called \emph{mutation}, allows one to define recursively a (possibly infinite) family of seeds. The (coefficient-free) \emph{cluster algebra }$\mathcal A_Q$ is the $\Z$-subalgebra of the \emph{ambient field} $\Q(x_1, \ldots, x_n)$ generated by the union of all the clusters of the seeds arising from this mutation procedure. It is therefore naturally equipped with a $\Z$-module structure. A free generating set for this $\Z$-module structure is called a \emph{$\Z$-linear basis} of $\mathcal A_Q$.

		The cluster structure of $\mathcal A_Q$ naturally endows it with a distinguished set of elements, the \emph{cluster monomials}, which are the monomials in cluster variables all belonging to a single cluster. The set of cluster monomials in $\mathcal A_Q$ is denoted by $\mathcal M_Q$. As was proved for instance by Lampe for (quantum) cluster algebras of type $A$, see \cite{Lampe:typeA}, this set plays a prominent role in the construction of $\Z$-linear bases of $\mathcal A_Q$ which are of interest with respect to the study of dual canonical bases.

		A remarkable fact about cluster algebras is the so-called \emph{Laurent phenomenon}, proved in \cite{cluster1}, which asserts that for any cluster $\c = (c_1, \ldots, c_n)$ in $\mathcal A_Q$, the cluster algebra $\mathcal A_Q$ is a subring of $\Z[c_1^{\pm 1}, \ldots, c_n^{\pm 1}]$. An element in $\mathcal A_Q$ is called \emph{positive} if it belongs to the semiring $\Z_{\geq 0}[c_1^{\pm 1}, \ldots, c_n^{\pm 1}]$ for any cluster $\c = (c_1, \ldots, c_n)$ in $\mathcal A_Q$. We denote by $\mathcal A_Q^+$ the cone of positive elements in $\mathcal A_Q$. The \emph{positivity conjecture} asserts that every cluster monomial in $\mathcal A_Q$ is a positive element of $\mathcal A_Q$, see \cite{cluster1}. This conjecture was in particular established for cluster algebras with a bipartite seed \cite{Nakajima:cluster} and for cluster algebras arising from surfaces \cite{ST:unpunctured,MSW:positivity} but remains open in general. Note that this latter class of cluster algebras contains the class of cluster algebras of types $A$ and $\widetilde A$ which are considered in the present article.

		A $\Z$-basis $\mathcal B$ of $\mathcal A_Q$ is called an \emph{atomic basis} (or a \emph{canonically positive basis}) of $\mathcal A_Q$ if 
		$$\mathcal A_Q^+ = \bigoplus_{b \in \mathcal B} \Z_{\geq 0}b.$$
		The definition of atomic bases, which first appeared in \cite{shermanz}, was motivated by the positivity of the structure constants for multiplication of dual canonical bases elements. Note that it follows from the definition that if an atomic basis exists, then it is unique. Therefore, under the existence hypothesis, we can speak of \emph{the} atomic basis of $\mathcal A_Q$. However, the problem of showing the existence of this atomic basis of $\mathcal A_Q$ remains wide open in general. 

		If $\mathcal A_Q$ is of finite type in the sense of \cite{cluster2}, Cerulli recently proved that the atomic basis coincides with the set of cluster monomials of $\mathcal A_Q$, see \cite{Cerulli:finitetype}. If $\mathcal A_Q$ is not of finite type, it was observed in \cite{shermanz} that the set of cluster monomials does not necessarily generate the cluster algebra as a $\Z$-module, and therefore is not the atomic basis of $\mathcal A_Q$. In the particular cases where $Q$ is an affine quiver of type $\widetilde A_{1,1}$ or $\widetilde A_{2,1}$, the atomic bases were made explicit in \cite{shermanz} and \cite{Cerulli:A21} respectively. In this article, we generalise this construction to arbitrary quivers of affine type $\widetilde A$ and we provide a new, short and elementary proof of Cerulli's result for cluster algebras of type $A$.

		The article is organised as follows. In the remainder of this section we recall the necessary background on cluster algebras from surfaces and their connection to representation theory in order to state our main results. In Section \ref{section:typeA}, we prove that cluster monomials form the atomic basis in a cluster algebra of type $A$. In Section \ref{section:BQgeometric}, we give a combinatorial interpretation to a conjectural formula provided in \cite{Dupont:qChebyshev} for the atomic basis in a cluster algebra of type $\widetilde A$. Finally, we prove in Section \ref{section:BQatomic} that this conjectural atomic basis is indeed the atomic basis in type $\widetilde A$.

	\subsection{Marked surfaces}
		Following \cite{FST:surfaces}, we define an (unpunctured) \emph{marked surface} as a pair $\SM$ where $S$ is a connected oriented 2-dimensional Riemann surface with non-empty boundary $\d S$ and $M$ is a finite set of marked points on $\d S$ such that each connected component of $\d S$ contains at least one marked point. Moreover, we assume that $\SM$ is not homeomorphic to a disc with less than three marked points. 

		For any $n \geq 1$, we denote by $\Pi_n$ the marked surface consisting of a disc with $n+3$ marked points on the boundary, which we sometimes refer to as the \emph{$n+3$-gon}. For $p,q \geq 1$, we denote by $C_{p,q}$ the marked surface consisting of an annulus with $p$ marked points $\iota_1, \ldots, \iota_p$ on a boundary component $\iota$, called the \emph{inside} and $q$ marked points $o_1, \ldots, o_q$ on the other boundary component, called the \emph{outside}, see Figure \ref{fig:exmp} below. 

		\begin{figure}[H]
			\begin{center}
				\begin{tikzpicture}[scale = .5]

					\draw[fill=gray!20] (0,0) circle (3);
					\draw[thick] (0,0) circle (3);
			
					\foreach \x in {0,45,...,315}
					{
						\fill +(\x:3) circle (.1);
					}

					\draw[fill=gray!20] (8,0) circle (3);
					\draw[fill=white] (8,0) circle (1);

					\draw[thick] (8,0) circle (3);
					\draw[thick] (8,0) circle (1);
			
					\fill (8,3) circle (.1);
					\fill (8,1) circle (.1);
					\fill (8,-1) circle (.1);

					\fill (8,1) node [below] {\tiny $\iota_1$};
					\fill (8,-1) node [above] {\tiny $\iota_2$};

					\fill (8,3) node [above] {\tiny $o_1$};
				\end{tikzpicture}
			\end{center}
			\caption{The marked surfaces $\Pi_5$ and $C_{2,1}$.}\label{fig:exmp}
		\end{figure}
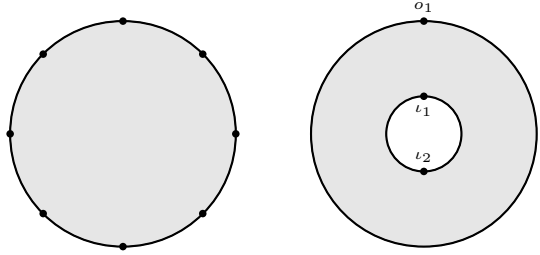

		Let $\SM$ be a marked surface. When we consider curves ``up to isotopy'' in $\SM$, we always mean ``up to isotopy with respect to the set $M$ of marked points''. Given two isotopy classes $\gamma$ and $\gamma'$ of curves in $\SM$, we define \emph{the number of intersections} $|\gamma \cap \gamma'|$ as the minimal number of intersections in the interior of $S$ of representatives of the two isotopy classes. Note that this number is reached if, once a hyperbolic structure is fixed on $\SM$, we consider geodesic representatives of $\gamma$ and $\gamma'$. Therefore, if one does not want to work up to isotopy, one can fix a hyperbolic structure and work with geodesic representatives. We say that two isotopy classes of curves are \emph{compatible} if they are the same or if their number of intersections is zero. An isotopy class $\gamma$ of curves in $\SM$ is called \emph{without self-intersection} if $|\gamma \cap \gamma| = 0$.

		A \emph{boundary segment} is a connected component of $\d S \setminus M$. We denote by $\C\SM$ the set of isotopy classes of curves joining two marked points, not isotopic to a boundary segment. An \emph{arc} in $\SM$ is an isotopy class of curves in $\SM$ joining two marked points, which is without self-intersection, and which is not isotopic to a boundary segment. We denote by $\A\SM$ the set of all arcs in $\SM$ and by $\T\SM$ the set of finite (possibly empty) families of pairwise compatible elements in $\A\SM$, considered with multiplicity. A curve (or its isotopy class) is called \emph{peripheral} if both its endpoints lie on a same boundary component and it is called \emph{bridging} otherwise. 

		A \emph{triangulation} $T$ of $\SM$ is a maximal set of pairwise distinct compatible arcs in $\SM$. Given a triangulation $T$ of $\SM$, one can associate to it a certain quiver $Q_T$ without loops and 2-cycles and thus a (coefficient-free) cluster algebra $\mathcal A_{Q_T}$, see \cite{FST:surfaces}. The cluster algebra constructed in this way is independent of the choice of the triangulation $T$. It is called \emph{the cluster algebra associated to the marked surface $\SM$} and is denoted by $\mathcal A_{\SM}$. 

		It is well-known that there is a bijection between the set $\A\SM$ of arcs in $\SM$ and the set of cluster variables in $\mathcal A_{\SM}$. Moreover, this bijection induces a bijection between the set of triangulations of $\SM$ and the set of clusters of $\mathcal A_{\SM}$, inducing a bijection between $\T\SM$ and the set of cluster monomials in $\mathcal A_{\SM}$. Moreover, in this context, mutations of clusters corresponds to flips of triangulations, see \cite{FST:surfaces}. Using these bijections, we will usually abuse notations and identify arcs in $\SM$ with cluster variables in $\mathcal A_{\SM}$, triangulations in $\SM$ with clusters in $\mathcal A_{\SM}$ and collections in $\T\SM$ with cluster monomials in $\mathcal A_{\SM}$.

		With these notations, the cluster algebra $\mathcal A_{\Pi_n}$ is of Dynkin type $A_n$ for any $n \geq 1$ and the cluster algebra $\mathcal A_{C_{p,q}}$ is of affine type $\widetilde A_{p,q}$ for any $p,q \geq 1$.

		Following \cite{ST:unpunctured}, for any cluster $T$ in $\mathcal A_{\SM}$, we can define an explicit map 
		$$x^T_?:\left\{\begin{array}{rcl}
			\A\SM & \fl & \mathcal A_{\SM} \\
			\gamma & \mapsto & x^T_\gamma
		\end{array}\right.$$
		which sends an arc $\gamma$ in $\A\SM$ to the $T$-expansion of the corresponding cluster variables in $\mathcal A_{\SM}$. This map does not depend on the choice of the triangulation so that we simply write $x_?$ for $x^T_?$ and $x_\gamma$ for $x^T_\gamma$. Given a family $\Gamma \in \T\SM$, we denote by $x_\Gamma$ the corresponding cluster monomial in $\mathcal A_{\SM}$; in other words $x_\Gamma = \prod_{\gamma \in \Gamma} x_\gamma$.

	\subsection{Atomic bases in type $A$}

		Our first main result is a short elementary combinatorial proof of the fact that cluster monomials form the atomic basis of a cluster algebra of type $A$.
		\begin{theorem}\label{theorem:typeA}
			Let $n \geq 1$, then 
			$$\ens{x_\Gamma \ | \ \Gamma \in \T(\Pi_n)}$$
			is the atomic basis of the cluster algebra $\mathcal A_{\Pi_n}$. 

			Equivalently, the atomic basis of a (coefficient-free) cluster algebra $\mathcal A$ of type $A$ is the set of cluster monomials in $\mathcal A$.
		\end{theorem}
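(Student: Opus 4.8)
The plan is to prove two things: first, that the cluster monomials $\{x_\Gamma \mid \Gamma \in \T(\Pi_n)\}$ form a $\Z$-linear basis of $\mathcal A_{\Pi_n}$, and second, that they are precisely the \emph{atoms}, i.e. the indecomposable positive elements, so that every positive element is a nonnegative integer combination of them. The first point is essentially known in finite type (the cluster monomials span by the standard monomial theory for finite-type cluster algebras, and linear independence follows from the distinctness of their leading terms in a suitable cluster expansion), so I would state it and concentrate on the positivity decomposition. Since positivity of cluster monomials in this surface type is available from \cite{ST:unpunctured,MSW:positivity}, the content is the \emph{atomicity}: I must show that each $x_\Gamma$ is indecomposable in the positive cone, and that the $x_\Gamma$ exhaust the atoms.

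For the combinatorial heart I would fix the triangulation/cluster $T$ and use the expansion map $x_?$ to transport everything into the Laurent polynomial ring $\Z[x_i^{\pm 1}]$, where $x_i$ are the variables attached to the arcs of $T$. The key idea is to read off, for each arc $\gamma$, the $T$-expansion $x_\gamma$ from the crossing pattern of $\gamma$ with $T$ (the snake-graph / $T$-path combinatorics of \cite{ST:unpunctured}), and to record the Newton polytope, or more simply the extremal exponent vector, of $x_\Gamma$. The decisive property to extract is that for a suitable choice of $T$ (for instance a ``fan'' or ``zigzag'' triangulation adapted to $\Gamma$) the cluster monomial $x_\Gamma$ has a \emph{unique} distinguished monomial — a corner of its Newton polytope with coefficient $1$ — and that the map $\Gamma \mapsto (\text{this corner exponent vector})$ is injective across all of $\T(\Pi_n)$. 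This gives linear independence and, crucially, sets up the atomic argument.

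To prove atomicity I would argue as follows. Suppose a cluster monomial decomposes in the positive cone as $x_\Gamma = p + q$ with $p,q \in \mathcal A^+_{\Pi_n}$ both nonzero; expand $p$ and $q$ in the basis. Because every basis element is positive and because the distinguished corner monomial of $x_\Gamma$ occurs with coefficient $1$ and cannot be cancelled (all coefficients being nonnegative), exactly one of $p,q$ must carry that corner, forcing it to ``contain'' $x_\Gamma$; a minimality/leading-term comparison then forces the other summand to vanish, a contradiction. The same extremal-monomial bookkeeping shows conversely that any indecomposable positive element must equal some $x_\Gamma$: given a positive element, subtract off the basis element matching its lexicographically extremal corner (with the largest possible coefficient), observe the remainder is again positive with strictly smaller support, and induct. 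I expect the \textbf{main obstacle} to be establishing the injectivity and the ``coefficient-one corner'' property uniformly over all collections $\Gamma$, including collections with multiplicities (non-reduced cluster monomials): I need a clean invariant, depending only on the intersection numbers $|\gamma \cap \tau|$ of the arcs of $\Gamma$ with the arcs $\tau \in T$, that simultaneously separates distinct $\Gamma$ and identifies the unique extremal monomial. The rest — spanning and the sign-coherent positivity input — I would cite rather than reprove, so that the argument stays short and elementary as advertised.
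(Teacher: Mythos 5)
Your outline has the right general shape---reduce everything to Laurent expansions in a well-chosen cluster $T$ and isolate a distinguished monomial of $x_\Gamma$ with coefficient one that no other basis element can produce---but the step you yourself flag as ``the main obstacle'' is precisely the content of the proof, and you have not supplied it. The resolution in the paper is simpler than the Newton-polytope/corner machinery you propose: for each $\Gamma \in \T(\Pi_n)$ one chooses $T$ to be a triangulation \emph{compatible with} $\Gamma$, so that $x_\Gamma$ is literally a monomial with nonnegative exponents in the cluster variables of $T$; that monomial is the ``distinguished corner,'' and no adapted fan or zigzag choice is needed. One then shows that for every $\Sigma \neq \Gamma$ the $T$-expansion of $x_\Sigma$ contains no copy of this monomial: if $\Sigma \subseteq T$ this is clear, and otherwise one picks $\sigma \in \Sigma \setminus T$ and grades by the variables of $T$ crossing $\sigma$. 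Two lemmas about the $T$-path expansion of \cite{S:An} do the work: every term of $x_\sigma$ has strictly negative degree in this grading (the walk has $m+1$ numerator and $m$ denominator factors, all denominator factors cross $\sigma$, and the first and last edges contribute degree zero), and every term of $x_\beta$ for $\beta$ compatible with $\sigma$ has non-positive degree (a parity argument on the crossings of a $\beta$-walk with $\sigma$ inside the union of triangles met by $\sigma$). Hence every term of $x_\Sigma$ has negative degree, while $x_\Gamma$ has nonnegative degree, so $\lambda_\Gamma(y)$ equals the coefficient of the Laurent monomial $x_\Gamma$ in the $T$-expansion of $y$, which is nonnegative by positivity of $y$. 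Without these lemmas (or a substitute) your argument does not close.

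Separately, your proposed induction---subtract the basis element matching the extremal corner and ``observe the remainder is again positive''---is not justified and should be abandoned. Positivity is a condition with respect to \emph{every} cluster simultaneously, and the difference of two positive elements has no reason to remain positive; this is exactly the trap the coefficient-extraction argument avoids, since it reads each $\lambda_\Gamma(y)$ directly off a single Laurent expansion of $y$ and never manipulates the positive cone. Likewise, reframing the problem as ``each $x_\Gamma$ is an indecomposable positive element and these exhaust the atoms'' is not the statement to be proved: the definition requires $\mathcal A_{\Pi_n}^+ = \bigoplus_\Gamma \Z_{\geq 0}\, x_\Gamma$, i.e.\ that \emph{all} coefficients of an arbitrary positive element in the cluster-monomial basis are nonnegative, and indecomposability of the individual $x_\Gamma$ neither implies this nor is needed for it.
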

		Note that this result was first obtained by Cerulli \cite{Cerulli:finitetype} using representations of quivers with potential in the sense of \cite{DWZ:potentials}.

	\subsection{Atomic bases in type $\widetilde A$}
		In the context of cluster algebras of type $\widetilde A_{p,q}$, we will consider an additional family of isotopy classes of curves in $C_{p,q}$, which we call \emph{loops}. A \emph{loop} in $C_{p,q}$ is the isotopy class of a non-contractible closed curve which lies in the interior of $C_{p,q}$. Note that for any $m \geq 1$, there is a unique loop $z_m$ in $C_{p,q}$ with $m-1$ self-intersections. We set $z=z_1$ and we denote by 
		$$\hat {\A}(C_{p,q}) = \A(C_{p,q}) \sqcup \ens{z_m \ | \ m \geq 1}$$
		the set of all arcs and loops in $C_{p,q}$ and by $\hat\T (C_{p,q})$ the set of finite (possibly empty) families of pairwise compatible arcs or loops in $\hat \A(C_{p,q})$, considered with multiplicity, containing at most one loop.
		
		In Section \ref{section:formula}, for any triangulation $T$ of $C_{p,q}$, we will extend the domain of definition of the maps $x^T_?$ to $\hat \A (C_{p,q})$ and prove that these new maps still do not depend on the choice of the triangulation $T$, thus defining a map $x_?$ on $\hat \A (C_{p,q})$. As before, in order to simplify notations, if $\Gamma$ is any collection in $\hat \T(C_{p,q})$, we adopt the following notations:
		$$x_\Gamma = \prod_{\gamma \in \Gamma} x_\gamma$$
		with the convention that $x_\emptyset = 1$.

		Our main result is an explicit realisation of the atomic basis in any (coefficient-free) cluster algebra of type $\widetilde A$:
		\begin{theorem}\label{theorem:BQatomic}
			Let $p,q \geq 1$, then 
			$$\ens{x_\Gamma \ | \ \Gamma \in \hat \T(C_{p,q})}$$
			is the atomic basis of the cluster algebra $\mathcal A_{C_{p,q}}$. 
		\end{theorem}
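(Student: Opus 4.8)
The plan is to prove that the proposed set $\ens{x_\Gamma \ | \ \Gamma \in \hat\T(C_{p,q})}$ is simultaneously a $\Z$-linear basis of $\mathcal A_{C_{p,q}}$ and that it exactly generates the positive cone $\mathcal A_{C_{p,q}}^+$ as a nonnegative combination. By the uniqueness of atomic bases noted in the introduction, it suffices to establish these two properties. I would organise the argument around three pillars: (i) positivity of each proposed element, (ii) the spanning/linear-independence (basis) property, and (iii) the atomicity, i.e. that every positive element expands with nonnegative integer coefficients in this set and that each basis element is itself indecomposable in the positive cone.

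First I would establish that each $x_\Gamma$ is a positive element of $\mathcal A_{C_{p,q}}$. For the arc and triangulation part this follows from the surface positivity results (\cite{ST:unpunctured,MSW:positivity}) already invoked in the introduction, since cluster monomials $x_\Gamma$ for $\Gamma \in \T(C_{p,q})$ are positive. The genuinely new ingredient is positivity of the loop elements $x_{z_m}$, which I would obtain from the explicit combinatorial formula for the $x_?$-map on $\hat\A(C_{p,q})$ developed in Section \ref{section:formula} and interpreted geometrically in Section \ref{section:BQgeometric}: the point is that $x_{z_m}$ is given by a Chebyshev-like transform of the elementary loop element $x_z$, following \cite{Dupont:qChebyshev}, and that this transform preserves positivity. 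I expect the key technical statement here to be that the loops $z_m$ satisfy a recursion (a Chebyshev recursion) whose solutions are manifestly positive once $x_z$ and $x_{z_1}$ are shown to be positive via the $T$-expansion formula.

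Next I would prove the basis property. The standard strategy is to exhibit a triangular change of basis between $\ens{x_\Gamma}$ and a known $\Z$-linear basis of $\mathcal A_{C_{p,q}}$ — for affine type the natural candidate is a semicanonical or ``generic'' basis whose elements are indexed by the same combinatorial data (compatible families of arcs together with a loop of some degree). Concretely, I would filter $\mathcal A_{C_{p,q}}$ by the multiplicity of the loop (or by the power of $x_z$), show that modulo lower loop-degree the $x_{z_m}$ reduce to $x_z^m$, and deduce unitriangularity of the transition matrix with respect to a lexicographic order on the combinatorial data; this simultaneously yields spanning and linear independence. The compatibility condition ``at most one loop'' in the definition of $\hat\T(C_{p,q})$ is exactly what makes the indexing set match the correct size for a basis.

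The hard part, and the heart of the theorem, is the atomicity: I must show that no proper nonnegative combination of the $x_\Gamma$'s is positive unless forced, equivalently that each $x_\Gamma$ is an \emph{indecomposable} positive element and that the positive cone $\mathcal A_{C_{p,q}}^+$ is exactly $\bigoplus_{\Gamma} \Z_{\geq 0} x_\Gamma$. To do this I would fix a convenient cluster $T$ and analyse the $T$-expansions: the atomic elements should be characterised by having a Laurent expansion whose Newton polytope (or denominator/$\g$-vector data) is extremal, so that any product or sum producing a positive element must already be a nonnegative combination of the $x_\Gamma$. The delicate subcase is distinguishing the loop elements $x_{z_m}$ from products of cluster monomials that happen to share the same leading term; here I expect to need a careful comparison of the positive expansions of $x_z^m$ versus $x_{z_m}$, showing that the difference $x_z^m - x_{z_m}$ (or the appropriate Chebyshev correction) is itself a nonnegative combination of strictly smaller basis elements and never negative, which is precisely what prevents any nontrivial positive linear relation. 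Establishing this minimality — that each candidate basis element is an extremal ray of the cone $\mathcal A_{C_{p,q}}^+$ — will be the main obstacle, and I anticipate it rests on the combinatorial intersection-number interpretation of multiplication developed in Sections \ref{section:BQgeometric}--\ref{section:BQatomic}.
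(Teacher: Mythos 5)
Your three-pillar skeleton (positivity, basis, nonnegative expansion of every positive element) matches the paper's decomposition into \textbf{(B1)}--\textbf{(B3)}, and your treatment of the basis property (unitriangular change of basis against the generic basis, filtering by loop degree so that $x_{z_m}\equiv x_z^m$ modulo lower terms) is essentially the paper's proof of \textbf{(B1)}. However, there are two genuine gaps. First, your positivity argument for the loop elements does not work as stated: the Chebyshev recursion $F_{m+1}(z)=zF_m(z)-F_{m-1}(z)$ has a negative coefficient, so it does \emph{not} manifestly preserve positivity, and positivity of $x_z$ alone does not propagate to $x_{z_m}$ through this recursion. The paper sidesteps this entirely by defining $x^T_{z_m}$ directly as a sum over coloured $m$-walks on the $m$-fold cover, which is subtraction-free by construction; positivity of every $x_\Gamma$ then follows because subtraction-free Laurent polynomials form a semiring. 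You would need either that combinatorial formula or some other direct positive expansion of $F_m(X_\delta)$.

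Second, and more seriously, your plan for atomicity stops at the point where the real work begins. Establishing that each $x_\Gamma$ is an extremal ray of the positive cone is not sufficient: the statement $\mathcal A^+_{C_{p,q}}=\bigoplus_\Gamma \Z_{\geq 0}x_\Gamma$ requires showing that for an \emph{arbitrary} positive $y=\sum_\Gamma\lambda_\Gamma(y)x_\Gamma$ each coefficient $\lambda_\Gamma(y)$ is nonnegative. The mechanism the paper uses is coefficient extraction: for each fixed $\Gamma$ one produces a cluster $T$ and a distinguished Laurent monomial (namely $x_\Gamma$ itself when $\Gamma$ is a cluster monomial, or a chosen term $t_{\Gamma,r}=x_{\overline\Gamma}x_\beta^m/x_\alpha^m$ in a triangulation $T^{(r)}$ whose bridging arcs wrap $r$ times around the annulus) which appears with coefficient $1$ in the $T$-expansion of $x_\Gamma$ and with coefficient $0$ in the $T$-expansion of every other basis element $\Sigma$ occurring in $y$; then $\lambda_\Gamma(y)$ is read off as a coefficient of the positive Laurent expansion of $y$. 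Proving the vanishing for all other $\Sigma$ is where the technical content lives: degree arguments with respect to the arcs of $T$ crossing a fixed peripheral arc (the analogues of the type $A$ lemmas), Cerulli's inequality $\e.\exp(x_\Sigma(\e))<0$, the vanishing of the $E$-invariant to handle $\e=0$, and, crucially for loop-containing $\Gamma$, the need to take $r$ sufficiently large depending on the finitely many $\Sigma$ appearing in $y$ (a single compatible cluster does not suffice). None of this is visible in your ``Newton polytope extremality'' formulation, and your proposed comparison of $x_z^m$ with $x_{z_m}$ addresses only a special consequence rather than the general positive element $y$.
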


	\subsection{A representation-theoretic interpretation}
		An explicit expression for the atomic basis of a cluster algebra associated to an affine quiver was conjectured in \cite[Conjecture 7.10]{Dupont:qChebyshev} in terms of representation theory of algebras. Our third main result is the proof of this conjecture for cluster algebras of type $\widetilde A$. Before stating it precisely, we need to recall some background concerning the representation-theoretic approach to cluster algebras of type $\widetilde A$.

		Let $p,q \geq 1$ and let $Q$ be a quiver of type $\widetilde A_{p,q}$, that is, an orientation of the cyclic diagram with $n=p+q$ vertices with $p$ arrows going clockwise and $q$ arrows going counterclockwise. We fix an algebraically closed field $\k$. 

		Since $Q$ is an acyclic quiver of type $\widetilde A$, its path algebra $\kQ$ is a tame hereditary algebra so that the category mod-$\kQ$ of finitely generated right $\kQ$-modules is well-understood. We refer the reader to \cite{ringel:1099} or \cite{SS:volume2} for classical results on the representation theory of such algebras.

		A $\kQ$-module $M$ is called \emph{rigid} if $\Ext^1_{\kQ}(M,M) = 0$. A connected component of the Auslander-Reiten quiver of mod-$\kQ$ is called \emph{regular} if it contains neither a projective nor an injective $\kQ$-module. A $\kQ$-module is called \emph{regular} if all its indecomposable direct summands belong to regular components of Auslander-Reiten quiver of mod-$\kQ$. We denote by $\reg\kQ$ the set of regular $\kQ$-modules and by $\rr\kQ$ the set of regular rigid $\kQ$-modules. The regular components of the Auslander-Reiten quiver of mod-$\kQ$ form a $\mathbb P^1(\k)$-family of tubes. At most two tubes have a rank strictly larger than one and these have respective ranks $p$ and $q$. A tube with a rank equal to 1 is called \emph{homogeneous}; otherwise, it is called \emph{exceptional}. It is known that an indecomposable module in a tube is rigid if and only if its quasi-length is strictly smaller than the rank of the tube in which it is contained. In particular, homogeneous tubes do not contain any rigid modules.

		Let $D^b(\modd \kQ)$ denote the bounded derived category of mod-$\kQ$. It is a triangulated category with suspension functor $[1]$ and Auslander-Reiten translation $\tau$. The \emph{cluster category} $\CC_Q$ is the orbit category of the functor $F=\tau^{-1}[1]$ in $D^b(\modd \kQ)$. It is a triangulated 2-Calabi-Yau category \cite{K,BMRRT} and, up to isomorphisms, the set of indecomposable objects in $\CC_Q$ can be identified with the disjoint union of the set of indecomposable $\kQ$-modules and the set of shifts of indecomposable projective modules. Therefore, we view $\kQ$-modules as objects in $\CC_Q$.

		An object $T$ in $\CC_Q$ is called \emph{cluster-tilting} if for any object $X$ in $\CC_Q$, the equality $\Ext^1_{\CC_Q}(T,X)=0$ holds if and only if $X$ belongs to the additive category $\add(T)$. It is well-known that there is a bijection between the set of cluster-tilting objects in $\CC_Q$ and the set of clusters in $\mathcal A_Q$ \cite{BMRRT,CK2}. Thus, we will usually identify cluster-tilting objects in $\CC_Q$ with clusters in $\mathcal A_Q$ (and with triangulations of $C_{p,q}$). This bijection induces a bijection between the set of isomorphism classes of rigid objects in $\CC_Q$ (that is, objects $M$ such that $\Ext^1_{\CC_Q}(M,M)=0$) and the set of cluster monomials in $\mathcal A_Q$.

		This bijection can be made explicit by using the so-called cluster characters, first introduced in \cite{CC} and whose definition was generalised in \cite{CK2,Palu}. For any cluster-tilting object $T$ in $\CC_Q$ we denote by $X^T_?$ the \emph{cluster character} on $\CC_Q$ associated to $T$ with values in the ring of Laurent polynomials in the cluster $T$. If $(c_1, \ldots, c_{n})$ is the cluster in $\mathcal A_Q$ corresponding to the cluster-tilting object $T$, the cluster character is a map 
		$$X^T_?: \Ob(\CC_Q) \fl \Z[c_1^{\pm 1}, \ldots, c_{n}^{\pm 1}]$$
		which endows the cluster algebra $\mathcal A_Q$ with a structure of a Hall algebra on the cluster category $\CC_Q$ in the sense that for any objects $M,N$ in $\CC_Q$, the product $X^T_MX^T_N$ is a linear combination of $X^T_Y$ where $Y$ runs over the middle terms of triangles involving $M$ and $N$ \cite{CK1,Palu:multiplication}. In particular, if $\Ext^1_{\CC_Q}(M,N) \simeq \k$, then $X^T_MX^T_N = X^T_B + X^T_{B'}$ where $B$ and $B'$ are the unique objects in $\CC_Q$ such that there exist triangles $M \fl B \fl N \fl M[1]$ and $N \fl B' \fl M \fl N[1]$, see \cite{CK2,Palu}. We refer the reader to \cite{Palu} for the precise definition.

		Since $Q$ is an affine quiver, the cluster character $X^T_?$ takes its values in the cluster algebra $\mathcal A_Q$ and for any object $M$ in $\CC_Q$, if $T$ and $T'$ are two distinct cluster-tilting objects in $\CC_Q$, then $X^T_M = X^{T'}_M$, see \cite{Dupont:genericvariables}. We will thus omit the reference to the cluster-tilting object $T$ and simply denote by $X_M$ the corresponding element in the cluster algebra $\mathcal A_Q$.

		We denote by $X_\delta$ the so-called \emph{generic variable of dimension $\delta$} in $\mathcal A_Q$, which is given by the image of any quasi-simple module in a homogeneous tube of the Auslander-Reiten quiver of mod-$\kQ$, see \cite{Dupont:genericvariables}.

		For any $m \geq 1$, we denote by $F_m$ the \emph{$m$th normalised Chebyshev polynomial of the first kind} defined by
		$$F_0(z) = 2,\, F_1(z) = z \textrm{ and } F_{m+1}(z)= zF_m(z) - F_{m-1}(z), \textrm{ for any } m \geq 1.$$
		They are characterised by
		$$F_m(t+t^{-1}) = t^{m}+t^{-m}, \textrm{ for any } m \geq 0.$$

		In \cite[Conjecture 7.10]{Dupont:qChebyshev}, it is conjectured that the set 
		$$\mathcal B_Q = \mathcal M_Q \sqcup \ens{X_R F_m(X_\delta) \ | \ m \geq 1, R \in \rr\kQ}$$
		is the atomic basis of $\mathcal A_Q$. 
		
		Out third main result is the following theorem:
		\begin{theorem}\label{theorem:BQgeometric}
			Let $Q$ be a quiver of type $\widetilde A_{p,q}$, then 
			$$\mathcal B_Q = \ens{ x_\Gamma \ | \ \Gamma \in \hat{\T}(C_{p,q})}.$$
		\end{theorem}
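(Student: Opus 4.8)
The plan is to prove the set equality by partitioning $\hat\T(C_{p,q})$ according to whether a family contains a loop, and matching each part with one of the two pieces of $\mathcal B_Q = \mathcal M_Q \sqcup \ens{X_R F_m(X_\delta) \ | \ m \geq 1,\ R \in \rr\kQ}$. The loop-free families are precisely the elements of $\T(C_{p,q})$, and by the bijection recalled in the section on marked surfaces (arcs $\leftrightarrow$ cluster variables, compatible families $\leftrightarrow$ cluster monomials), together with the identification $x_\gamma = X_{M_\gamma}$ of the surface expansion with the cluster character, we immediately get $\ens{x_\Gamma \ | \ \Gamma \in \T(C_{p,q})} = \mathcal M_Q$. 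So the whole content is to match the one-loop families with the ``band'' part of $\mathcal B_Q$.

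The first substantial step is to set up the dictionary between arcs and indecomposable objects of $\CC_Q$. A bridging arc (endpoints on distinct boundary components) crosses the core of the annulus and corresponds to a preprojective or preinjective module, or a shift of a projective; a peripheral arc (both endpoints on one boundary component) lies in the interior of a neighbourhood of that component and corresponds to an indecomposable regular rigid module, sitting in the exceptional tube of rank $p$ or $q$ according to which boundary it hugs. Under this correspondence an arc is rigid exactly when its module has quasi-length strictly smaller than the rank of its tube, matching the characterisation of $\rr\kQ$ recalled above. Since cluster characters are multiplicative on direct sums, a decomposable regular rigid module $R = \bigoplus_i R_i$ gives $X_R = \prod_i X_{R_i} = x_{\Gamma_R}$, where $\Gamma_R$ is the family of peripheral arcs attached to the summands; rigidity of $R$ translates precisely into pairwise compatibility of the arcs of $\Gamma_R$. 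This yields a bijection between $\rr\kQ$ and the compatible families of peripheral arcs, compatible with the equality $X_R = x_{\Gamma_R}$.

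The second substantial step concerns the loops. Using the extension of $x_?$ to $\hat\A(C_{p,q})$ constructed in Section \ref{section:formula}, I would first identify the simple loop with the generic variable, $x_z = X_\delta$, and then establish the skein-type recursion $x_z\, x_{z_m} = x_{z_{m+1}} + x_{z_{m-1}}$ for $m \geq 1$, together with the base values $x_{z_1} = x_z$ and $x_{z_2} = x_z^2 - 2$. These are exactly the defining relations and initial data of the normalised Chebyshev polynomials $F_m$, so an induction gives $x_{z_m} = F_m(x_z) = F_m(X_\delta)$ for all $m \geq 1$. Geometrically, the recursion comes from resolving the single self-intersection created when one extra turn is added to the loop $z_m$, which is why the $m-1$ self-intersections of $z_m$ are precisely the right bookkeeping for the $m$th Chebyshev polynomial.

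Finally I would assemble the two steps. A one-loop family is $\Gamma = \Gamma' \sqcup \ens{z_m}$ with $\Gamma'$ a compatible family of arcs each compatible with $z_m$; since any bridging arc necessarily meets every loop while peripheral arcs do not, the arcs compatible with a loop are exactly the peripheral ones, so $\Gamma'$ ranges over compatible families of peripheral arcs. Multiplicativity of $x_?$ then gives $x_\Gamma = x_{\Gamma'}\, x_{z_m} = X_R\, F_m(X_\delta)$ with $R = R_{\Gamma'} \in \rr\kQ$, and this correspondence is bijective by the two steps above. Combining with the loop-free case proves $\mathcal B_Q = \ens{x_\Gamma \ | \ \Gamma \in \hat\T(C_{p,q})}$. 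I expect the main obstacle to be the loop computation of the third paragraph: verifying the initial values and the skein recursion for $x_{z_m}$ from the surface expansion of Section \ref{section:formula}, and matching them cleanly to $F_m$, is where the real work lies, whereas the arc--module dictionary of the second paragraph is largely a repackaging of the known representation theory of tame hereditary algebras of type $\widetilde A$.
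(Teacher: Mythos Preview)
Your overall architecture---splitting $\hat\T(C_{p,q})$ into loop-free and one-loop families, matching the former with $\mathcal M_Q$ via the arc/cluster-variable bijection, and matching the latter with $\ens{X_R F_m(X_\delta)}$ via the peripheral-arc/regular-rigid dictionary---is exactly the paper's, and your final assembly (peripheral arcs are precisely those compatible with a loop, multiplicativity gives $x_\Gamma = X_R F_m(X_\delta)$) is the same as the paper's closing paragraph.

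The genuine difference is in how you prove the key identity $x_{z_m} = F_m(X_\delta)$. The paper does \emph{not} establish a skein recursion. Instead it imports from representation theory the \emph{higher difference property} $F_m(X_\delta) = X_{M_m} - X_{N_m}$, where $M_m = R_1^{(mp)}$ and $N_m = R_2^{(mp-1)}$ are specific indecomposables in the tube $\mathcal T_p$, and then proves the combinatorial analogue $x_{z_m} = x_{M_m} - x_{N_m}$ by exhibiting an explicit partition of the set $\mathcal W^T_{M_m}$ of coloured $M_m$-walks into (decorated) $z_m$-walks and (decorated) $N_m$-walks; Lemma~\ref{lem:xTXTCp} then identifies $x_{M_m},x_{N_m}$ with $X_{M_m},X_{N_m}$. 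Your skein approach is more self-contained and stays entirely on the surface side, avoiding the citation to the difference property; on the other hand, you would have to prove the recursion $x_z\,x_{z_m} = x_{z_{m+1}} + x_{z_{m-1}}$ and the base case $x_{z_2}=x_z^2-2$ directly from the coloured-walk definition on the $m$-fold cover (this is not literally a smoothing of a self-crossing, since $x_{z_m}$ is defined walk-theoretically rather than skein-theoretically), and you still need a separate argument for $x_z = X_\delta$. The paper's route trades that combinatorial bijection for an appeal to prior representation theory; yours trades the appeal for a direct inductive computation.
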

		Thus, combined with Theorem \ref{theorem:BQatomic}, this proves \cite[Conjecture 7.10]{Dupont:qChebyshev}.

\section{Proof of Theorem \ref{theorem:typeA}}\label{section:typeA}
	In this section, $Q$ is a quiver of type $A_n$ with $n \geq 1$. Proving Theorem \ref{theorem:typeA} amounts to showing the following three points:
	\begin{enumerate}
		\item[\Ax 1] The cluster monomials form a $\mathbb Z$-linear basis of $\mathcal A_Q$.
		\item[\Ax 2] All cluster monomials are positive elements of $\mathcal A_Q$.
		\item[\Ax 3] Every positive element of $\mathcal A_Q$ can be written as a $\mathbb Z_{\geq 0}$-linear combination of cluster monomials.
	\end{enumerate}

	In this setting, \Ax 1 and \Ax 2 are already well-known. \Ax 1 follows from \cite{CK1}. \Ax 2 follows from the explicit positive combinatorial formulas of \cite{S:An}; positivity can also be shown directly by using the Ptolemy relations. The remaining step is therefore to prove \Ax 3. 

	\subsection{A combinatorial formula for $A_n$ cluster variables}\label{ss:combform} 
		$\Pi_n$ is the disc with $n+3$ marked points on the boundary. Recall that the cluster variables of an $A_n$ cluster algebra are in bijection with $\C(\Pi_{n})$. We begin by recalling a formula for expanding the cluster variable corresponding to a given curve $\gamma$ in terms of the cluster variables corresponding to a triangulation $T$ of $\Pi_n$. 

		We write $\mathcal W^T_\gamma$ for the set of walks joining the two endpoints of $\gamma$ satisfying the following: 
			\begin{enumerate}
				\item Each edge of the walk is either an arc of $T$ or a boundary segment.
				\item No edge of the walk is immediately followed by the same edge in the reverse direction.
				\item The walk is of odd length; we number the edges $\alpha_1,\dots,\alpha_{2m+1}$.
				\item Each even-numbered edge crosses $\gamma$.
				\item The arc $\gamma$ crosses the even-numbered edges of the path in the same order that they appear on the walk.
			\end{enumerate}
			Such a walk is called a \emph{coloured $\gamma$-walk on $T$}.

			For each $w\in \mathcal W^T_\gamma$, define a Laurent monomial
			$$p(w)=\frac{x_{\alpha_1}x_{\alpha_3}\dots x_{\alpha_{2m+1}}} {x_{\alpha_2}x_{\alpha_4}\dots x_{\alpha_{2m}}}.$$
			Then: 
			\begin{equation}\label{stform} 
				x_\gamma = \sum_{w\in \mathcal W^T_\gamma} p(w).
			\end{equation}
			This result first appeared in print in \cite{S:An}, but it had also been noticed by others previously. 

	\subsection{Technical lemmas}
		The following lemmas are at the heart of our argument. 

		\begin{lem}\label{lemma:Atechone}
			Fix a triangulation $T$ of the disc $\Pi_n$, and let $\gamma\in \A(\Pi_n)$ which is not in $T$. Then any term in the $T$-expansion of $x_\gamma$ has negative degree with respect to the cluster variables corresponding to arcs of $T$ which cross $\gamma$. 
		\end{lem}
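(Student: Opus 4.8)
The plan is to extract the statement directly from the coloured-walk formula \eqref{stform}, so that everything reduces to understanding a single monomial $p(w)$ for $w=(\alpha_1,\dots,\alpha_{2m+1})\in\mathcal W^T_\gamma$. Write $s,t$ for the endpoints of $\gamma$ and let $a_1,\dots,a_d$ be the arcs of $T$ crossed by $\gamma$, in the order in which $\gamma$ meets them; note $d\geq 1$ since $\gamma\notin T$ is not compatible with $T$ and hence crosses at least one of its arcs. The key structural input I would use is the standard description of coloured $\gamma$-walks (complete $T$-paths): the even-indexed edges $\alpha_2,\dots,\alpha_{2m}$ are \emph{exactly} the crossed arcs $a_1,\dots,a_d$, each occurring once and in this order (so $m=d$), while each odd-indexed edge is a side of a triangle of $T$ traversed by $\gamma$ and does \emph{not} cross $\gamma$.

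To justify this structure I would realise $\Pi_n$ as a convex polygon and represent $\gamma$ and every arc of $T$ by straight chords. Travelling along $\gamma$ from $s$ to $t$, between two consecutive crossings $a_k$ and $a_{k+1}$ the arc $\gamma$ stays inside a single triangle $\Delta_k$ of $T$ having $a_k$ and $a_{k+1}$ as two of its sides; the intervening odd edge $\alpha_{2k+1}$ of $w$ joins an endpoint of $a_k$ to an endpoint of $a_{k+1}$ and, being distinct from both by the non-backtracking condition (2), must be the third side of $\Delta_k$. Since $\gamma$ enters $\Delta_k$ through $a_k$ and leaves through $a_{k+1}$, it separates their common vertex from the third side, whose two endpoints therefore lie on the same side of the chord $\gamma$; hence that odd edge does not cross $\gamma$. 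The two extreme odd edges $\alpha_1,\alpha_{2m+1}$ share the endpoint $s$, respectively $t$, with $\gamma$ and so cannot cross it either. The delicate point, and the one I expect to be the real obstacle, is to show that $w$ shadows $\gamma$ triangle-by-triangle \emph{without skipping} any crossing, i.e. that conditions (4) and (5) force the even edges to exhaust $a_1,\dots,a_d$ in order; this is exactly what rules out an odd edge secretly crossing $\gamma$, and it is where the global combinatorics of the walk, and not merely its local shape, has to be used.

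Granting the structure, the conclusion is immediate: in $p(w)$ each crossed arc $a_j$ appears once in the denominator and never in the numerator, so the exponent of $x_{a_j}$ in $p(w)$ equals $-1$. In particular every term of the $T$-expansion of $x_\gamma$ has strictly negative degree with respect to each cluster variable $x_{a_1},\dots,x_{a_d}$ attached to an arc of $T$ crossing $\gamma$ (and total degree $-d<0$ with respect to this set of variables). As this holds for every $w\in\mathcal W^T_\gamma$, the lemma follows.
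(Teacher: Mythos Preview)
Your approach is correct in spirit but proves much more than needed, and the ``delicate point'' you flag as the main obstacle is precisely what the paper sidesteps. You aim to show that \emph{no} odd-indexed edge of $w$ crosses $\gamma$ (hence each $x_{a_j}$ has exponent exactly $-1$), and for this you need the full structural fact that the even edges of a coloured $\gamma$-walk exhaust $a_1,\dots,a_d$ with no skipping. That fact is true, but you admit you have not proved it, so as written your argument has a genuine gap.

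The paper's proof avoids this entirely. It only uses two trivial observations: all $m$ even-indexed edges cross $\gamma$ (that is condition (4), nothing more), and the first and last odd-indexed edges $\alpha_1,\alpha_{2m+1}$ share an endpoint with $\gamma$, hence cannot cross it in the interior of the disc. This already bounds the degree by $(m-1)-m=-1<0$, which is all the lemma claims. No control over the remaining odd edges $\alpha_3,\dots,\alpha_{2m-1}$ is needed, and in particular one never has to argue that the walk shadows $\gamma$ triangle by triangle. Your stronger conclusion (degree equals $-d$ in every term) is interesting but unnecessary here; if you want it, you would indeed have to cite or reprove Schiffler's characterisation of complete $T$-paths.
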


		\begin{proof} 
			Consider $w$ a coloured $\gamma$-walk on $T$, and suppose that its length is $2m+1$. The corresponding term in the expansion of $x_\gamma$ has $m+1$ factors in the numerator, and $m$ factors in the denominator. All the factors in the denominator correspond to arcs in $\Pi_n$ which cross $\gamma$. Neither the first nor the last edge of $w$ contributes to the degree, proving the lemma.
		\end{proof}

		\begin{lem} \label{lemma:Atech}
			Fix a triangulation $T$ of the disc $\Pi_n$, and let $\gamma\in \A(\Pi_n)$, with $\gamma\not\in T$. Suppose that $\beta$ is an arc of $\Pi_n$ which is compatible with $\gamma$. Then each term in the $T$-expansion of $x_\beta$ has non-positive degree with respect to arcs of $T$ which cross $\gamma$.
		\end{lem}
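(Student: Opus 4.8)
The plan is to use the explicit combinatorial formula \eqref{stform} together with Lemma \ref{lemma:Atechone}, reducing everything to a statement about how coloured $\beta$-walks on $T$ interact with the arcs of $T$ crossing $\gamma$. The key point is that since $\beta$ and $\gamma$ are compatible, we may fix geodesic representatives so that $\beta$ and $\gamma$ do not intersect in the interior of the disc. I would then analyse an arbitrary coloured $\beta$-walk $w = (\alpha_1, \dots, \alpha_{2k+1})$ and track, among its edges, which ones are arcs of $T$ that cross $\gamma$. Write $\gamma$-crossing for such arcs. The degree of the monomial $p(w)$ with respect to $\gamma$-crossing arcs is the number of $\gamma$-crossing edges appearing in odd position minus the number appearing in even position; I need to show this difference is $\leq 0$ for every $w$.

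The main idea is to follow $\beta$ along its geodesic and record each time it crosses an arc of $T$ that also crosses $\gamma$. First I would observe that since $\gamma$ separates the disc into two regions and $\beta$ is disjoint from $\gamma$, the endpoints of $\beta$ lie in a definite relationship to $\gamma$; the arcs of $T$ crossing $\gamma$ that the walk $w$ uses in the even positions are exactly the arcs that $\beta$ itself crosses, by condition (4) of the walk definition applied now relative to $\gamma$. The heart of the argument is a parity/interleaving statement: as one traverses the walk, $\gamma$-crossing edges in even position correspond to genuine crossings of $\beta$ with arcs separating it from $\gamma$, whereas $\gamma$-crossing edges in odd position are more constrained. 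I would make this precise by relating the sequence of $\gamma$-crossing edges in $w$ to a walk in the ``strip'' of triangles cut out by $\gamma$, showing that every odd-position $\gamma$-crossing edge can be injectively matched with a preceding even-position $\gamma$-crossing edge.

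Concretely, I would argue as follows. Consider the subsurface obtained by cutting $\Pi_n$ along $\gamma$; the arcs of $T$ crossing $\gamma$ become, after cutting, arcs in one of the two pieces. Since $\beta$ does not cross $\gamma$, the walk $w$ enters and leaves the neighbourhood of $\gamma$ in matched pairs. Each maximal run of the walk that stays on one side contributes balanced or deficient degree. The cleanest formulation is to show that in the sequence of $\gamma$-crossing edges encountered along $w$, consecutive such edges alternate in a way forcing at least as many even-indexed as odd-indexed occurrences. I expect this bookkeeping to follow from the fact that an edge of $T$ crossing $\gamma$, when it appears at an odd position $\alpha_{2i+1}$, must be immediately preceded and followed (in positions $2i$ and $2i+2$) by edges crossing $\beta$, which by the disjointness of $\beta$ and $\gamma$ cannot themselves cross $\gamma$ unless forced, pinning down the local picture.

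The hard part will be making the interleaving/matching argument fully rigorous rather than merely plausible: one must rule out configurations where many $\gamma$-crossing arcs of $T$ appear in odd positions of a single $\beta$-walk. I anticipate the cleanest route is to pass to the cut surface along $\gamma$ and reinterpret $x_\beta$ there, so that arcs crossing $\gamma$ become boundary-adjacent or identified arcs and the degree statement becomes a statement about the first and last edges of a walk (as in Lemma \ref{lemma:Atechone}, where the extremal edges do not contribute). In other words, I would try to reduce Lemma \ref{lemma:Atech} to Lemma \ref{lemma:Atechone} applied in the cut surface, turning the inequality ``non-positive degree'' into the already-established ``negative degree'' statement plus a boundary correction. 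Verifying that the cut-and-glue operation is compatible with the walk formula \eqref{stform} is the technical crux.
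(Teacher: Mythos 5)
Your overall strategy --- expand $x_\beta$ via \eqref{stform} and bound, walk by walk, the number of odd-position $\gamma$-crossing edges by the number of even-position ones --- is the same as the paper's, but the decisive step is exactly the one you leave open, and the two mechanisms you propose for closing it do not work as stated. First, an injective matching of odd-position $\gamma$-crossing edges to preceding even-position ones cannot exist in general: by condition (5), the even-position edges of $w$ that cross $\gamma$ form a single consecutive block $w_{2i},\dots,w_{2j}$ (because an even-position edge crossing $\beta$ but not $\gamma$ lies outside the polygon $P$ of triangles traversed by $\gamma$, and so does every earlier, resp.\ later, edge of the walk), and the odd positions that can possibly cross $\gamma$ are precisely $w_{2i-1},\dots,w_{2j+1}$ --- one more slot than the even ones. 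So a purely local matching fails in the extremal configuration; what rules that configuration out is a global parity argument, which is the idea missing from your write-up: if all of $w_{2i-1},\dots,w_{2j+1}$ crossed $\gamma$, the walk $w$ would cross $\gamma$ an odd number of times in total, hence so would $\beta$ (crossing parity being a homotopy invariant rel endpoints), contradicting $|\beta\cap\gamma|=0$. Second, your fallback of cutting along $\gamma$ and invoking Lemma \ref{lemma:Atechone} in the cut surface does not obviously reduce to anything: the coloured $\beta$-walks on $T$ genuinely use edges that cross $\gamma$ and so do not stay in the piece containing $\beta$, the restriction of $T$ to a piece is not a triangulation of that piece, and you yourself flag the compatibility of cutting with \eqref{stform} as an unverified ``technical crux.''

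You also omit the case where $\beta$ and $\gamma$ share an endpoint, which the paper treats separately: there the block of even-position $\gamma$-crossing edges starts at $w_2$, and positivity is excluded not by parity but by the observation that $w_1$ is incident to the common endpoint and therefore cannot cross $\gamma$. Finally, your assertion that the even-position edges crossing $\gamma$ are ``exactly the arcs that $\beta$ itself crosses'' conflates the two crossing conditions: condition (4) says the even-position edges cross $\beta$, and only a sub-block of them crosses $\gamma$. In short, the proposal identifies the right quantity to bound but does not contain the two ideas (the consecutive-block structure plus the crossing-parity contradiction, and the shared-endpoint variant) that constitute the actual proof.
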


		\begin{proof} 
			If $\beta$ is in $T$, then it cannot cross $\gamma$, so its degree is zero. If $\beta=\gamma$, we are done by the previous lemma. Otherwise, choose $w$ a coloured $\beta$-walk on $T$. Let $P$ denote the union of the triangles through which $\gamma$ passes. 

			Suppose first that $\beta$ and $\gamma$ do not share an endpoint. Consider the even-position edges of $w$ which lie in the interior of $P$. Since edges which cross $\beta$ must be encountered on $w$ in the same order as on $\beta$, these even-position edges must be a consecutive string of even positions, say $w_{2i}, w_{2i+2}\dots, w_{2j}$. Since $w_{2i-2}$ crosses $\beta$ but doesn't cross $\gamma$, it lies outside $P$, and so do all previous edges; similarly for $w_{2j+2}$ and all subsequent edges. It follows that the total degree of $p(w)$ with respect to the edges crossed by $\gamma$ would be positive only if all the odd-numbered edges $w_{2i-1},\dots,w_{2j+1}$ also cross $\gamma$. But it would follow that $w$ crosses $\gamma$ an odd number of times, and thus that so does $\beta$, contradicting the fact that
			$\beta$ and $\gamma$ are compatible.

			Suppose next that $\beta$ and $\gamma$ share an endpoint, which we assume is the starting point. Suppose that the even-position edges of $w$ which lie in $P$ are
			$w_2,\dots,w_{2j}$. As in the previous case, the corresponding term in the expansion of $x_\beta$ would be positive only if $w_1,\dots,w_{2j+1}$ all also cross $\gamma$. But $w_1$ is incident to the endpoint of $\gamma$, so it does not cross $\gamma$. 
		\end{proof}

	\subsection{Proof of \Ax 3}
		Let $y$ be a positive element in $\mathcal A_Q$. Write:
		$$y = \sum _{\Gamma \in \T(\Pi_n)} \lambda_\Gamma(y) x_\Gamma.$$

		Choose a particular $\Gamma$ appearing in the sum. We wish to show that $\lambda_\Gamma(y)$ is positive. Let $T$ be a triangulation of $\Pi_n$ which is compatible with $\Gamma$. 

		Consider some collection $\Sigma \in \T(\Pi_n)$, with $\Sigma \ne \Gamma$. We wish to show that the $T$-expansion of $x_{\Sigma}$ does not include any term $x_\Gamma$. 

		If $\Sigma$ consists of arcs from $T$, then $x_{\Sigma}$ is its own $T$-expansion, and we are done. Suppose otherwise. Let $\sigma$ be an arc in $\Sigma$ which is not an arc of $T$. Lemma \ref{lemma:Atechone} tells us that each term in the $T$-expansion of $x_\sigma$ is of negative degree with respect to the edges which cross $\sigma$. At the same time, Lemma \ref{lemma:Atech} tells us that each term in the $T$-expansion of the other factors of $x_{\Sigma}$ are of non-positive degree with respect to the same grading. It follows that each term in the $T$-expansion of $x_\Sigma$ is of negative degree with respect to this grading, which implies in particular that it contains no term $x_\Gamma$.

		Therefore, $\lambda_\Gamma(y)$ equals the coefficient of $x_\Gamma$ in the $T$-expansion of $y$, which is therefore non-negative, as desired. \hfill \qed

\section{Proof of Theorem \ref{theorem:BQgeometric}}\label{section:BQgeometric}

	\subsection{A combinatorial formula for curves}
		We begin by recalling the extension of the combinatorial formula which we stated in Section \ref{ss:combform}, to the case of a general marked surface $(S,M)$, following \cite{ST:unpunctured}. 

		There, a formulation is given of the rules (1)--(5) for coloured $\gamma$-walks on a triangulation $T$. However, in practice, it is easier to use the following reformulation, which is immediate from Lemma 4.7 of \cite{ST:unpunctured}. 

		Take $\gamma$, and lift it to an arc $\widetilde \gamma$ in the universal cover. Lift the triangulation $T$ to a triangulation $\widetilde T$. Then define the coloured $\gamma$-walks on $T$ to be the images on $S$ of the 
coloured $\widetilde \gamma$-walks on $\widetilde T$. The main theorem of \cite{ST:unpunctured} is that \eqref{stform} still holds in this case, i.e., $x_\gamma$ is the  sum of the terms $p(w)$, as $w$ runs through the coloured $\gamma$-walks on $T$.  

		We also want to assign an element of the cluster algebra to a curve which runs between two marked points and which has self-intersections. For such a curve $\gamma$, we take the same definition of $\mathcal W^T_\gamma$ as above, and define $x^T_\gamma$ to be the sum of $p(w)$ over all coloured $\gamma$-walks $w$. Note that, {\it a priori}, this definition is not independent of the choice of $T$. 

		\begin{lem}\label{lem:mutxtC}
			Let $T$ and $T'$ be two triangulations of $\SM$. Then for any curve $\gamma \in \C\SM$, we have $x^T_\gamma = x^{T'}_\gamma$.
		\end{lem}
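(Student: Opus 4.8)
The plan is to reduce the statement to the classical case of a polygon by passing to the universal cover, where the self-intersections of $\gamma$ disappear. Since any two triangulations of $\SM$ are connected by a sequence of flips \cite{FST:surfaces}, transitivity of equality lets me assume that $T' = \mu_\beta(T)$ is obtained from $T$ by a single flip, replacing an arc $\beta$ by $\beta^*$. Fixing a hyperbolic structure and working with geodesic representatives, let $\pi \colon \widetilde S \to S$ be the universal cover, which is a disc, and choose a lift $\widetilde\gamma$ of $\gamma$. Because $\widetilde S$ is simply connected, $\widetilde\gamma$ is a \emph{simple} geodesic arc between two lifted marked points, even though $\gamma$ itself may self-intersect; this is the key gain from lifting. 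Lift $T$ and $T'$ to $\widetilde T$ and $\widetilde{T'}$, and note that, since $\beta$ has no self-intersection, $\widetilde{T'}$ is obtained from $\widetilde T$ by simultaneously flipping the pairwise disjoint lifts of $\beta$. By the very definition recalled above, the coloured $\gamma$-walks on $T$ are the $\pi$-images of the coloured $\widetilde\gamma$-walks on $\widetilde T$, and $x_{\pi(a)} = x_a$ for each arc $a$ of $\widetilde T$; hence it suffices to prove $x^{\widetilde T}_{\widetilde\gamma} = x^{\widetilde{T'}}_{\widetilde\gamma}$ upstairs and push the equality down through $\pi$.

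Next I would localise. The arc $\widetilde\gamma$ crosses only finitely many arcs of $\widetilde T$, so the union $\Pi$ of the triangles of $\widetilde T$ that it meets is a finite polygon in which $\widetilde\gamma$ is an honest arc. Let $P$ be the union of $\Pi$ with all triangles of $\widetilde T$ adjacent to it. Then $P$ is a disc, every lift of $\beta$ relevant near $\widetilde\gamma$ is an \emph{interior} arc of $P$, and the flip therefore turns $\widetilde T \cap P$ and $\widetilde{T'} \cap P$ into two triangulations of $P$ sharing the same boundary. Every coloured $\widetilde\gamma$-walk, with respect to either $\widetilde T$ or $\widetilde{T'}$, has its even edges crossing $\widetilde\gamma$ and its odd edges lying in the fan of $\widetilde\gamma$, so all such walks stay inside $P$; consequently $x^{\widetilde T}_{\widetilde\gamma}$ and $x^{\widetilde{T'}}_{\widetilde\gamma}$ coincide with the expansions of the cluster variable of $\widetilde\gamma$ computed inside $P$ relative to $\widetilde T \cap P$ and $\widetilde{T'} \cap P$ respectively.

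Finally, since $\widetilde\gamma$ is a simple arc in the disc $P$, these two expansions are equal by the classical type $A$ independence of a cluster variable from the choice of triangulation, which is exactly the content of formula \eqref{stform} applied to the polygon $P$. Transporting this equality back down along $\pi$ yields $x^T_\gamma = x^{T'}_\gamma$, completing the single-flip case and hence the lemma.

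The main obstacle is the geometric bookkeeping of the localisation step rather than any algebra: one must verify that a single flip downstairs corresponds, in a neighbourhood of $\widetilde\gamma$, to finitely many ordinary diagonal flips of \emph{interior} arcs of the polygon $P$ (so that $\widetilde T \cap P$ and $\widetilde{T'} \cap P$ genuinely triangulate the same disc), and that every coloured $\widetilde\gamma$-walk is confined to $P$ so that no monomial of either expansion is lost or gained in the restriction. Once this confinement is pinned down, the reduction to the polygon is clean, and the self-intersections of $\gamma$ create no further difficulty precisely because they vanish upon lifting to the simply connected cover.
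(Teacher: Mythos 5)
Your proof is correct and follows essentially the same route as the paper: lift $\gamma$ and the triangulations to the universal cover, confine all coloured walks to a finite polygon containing the triangles met by $\widetilde\gamma$, and invoke the type $A$ independence of the expansion on that polygon. The only cosmetic difference is that you first reduce to a single flip via connectivity of the flip graph, whereas the paper compares two arbitrary triangulations directly by enlarging the polygon to contain the relevant triangles of both; this extra reduction is harmless but unnecessary.
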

		\begin{proof}
			Fix $\hat \gamma$, a lift of $\gamma$ to the universal cover of $\SM$. Let $\hat T$ and $\hat T'$ be lifts of the triangulations $T$ and $T'$ to the universal cover of $\SM$. Let $P$ be the union of the triangles of $\hat T$ which intersect $\hat \gamma$. Consider the expansion of $x_{\hat \gamma}$ in terms of cluster variables corresponding to edges of $\hat T$. Only edges from $P$ are involved in the expansion, and the expansion is independent of the triangulation outside $P$. 

			Now, if we take $D$ to be the union of the triangles of $T'$ which intersect $P$, we have a disc with marked points, which can be triangulated either by the restriction of $T'$, or by extending the triangulation of $P$ by $T$. In the type $A$ cluster algebra associated to $D$, we know that the expansions of $x_{\hat \gamma}$ with respect to these two triangulations agree. 

			When we pass down to $(S,M)$, these two expansions yield the expansions of $x_\gamma$ with respect to $T$ and $T'$, so these also coincide. 
		\end{proof}

		From now on, we will in general omit the reference to the triangulation and for any curve $\gamma$ in $\SM$, the notation $x_\gamma$ will designate the element in the ambient field corresponding to $x^T_\gamma$ for any choice of triangulation $T$. The notation $x^T_\gamma$ will be kept only in order to specify the the explicit Laurent expansion in the cluster $T$.

	\subsection{Curves in $C_{p,q}$ and objects in $\CC_Q$}
		From now on, we assume that $\SM = C_{p,q}$ for some $p,q \geq 1$ and that $Q$ is an affine quiver of type $\widetilde A_{p,q}$. It is known that there is a bijection $\gamma \mapsto M_\gamma$ from the set $\C(C_{p,q})$ to the set of isomorphism classes of indecomposable objects in $\CC_Q$ which are not contained in an homogeneous tube \cite{BZ:clustercatsurfaces}. Let us make this bijection explicit for objects in exceptional tubes. 

		If $p=q=1$, there are no exceptional tubes in $\Gamma(\CC_Q)$. Thus, without loss of generality, we assume that $p >1$. We consider the set $\C^\iota(C_{p,q})$ of curves in $\C(C_{p,q})$ both of whose endpoints are the inside boundary $\iota$ of $C_{p,q}$ containing $p$ points. We denote by $m_i$ with $i \in \Z/p\Z$ the marked points on $\iota$. The orientation of $C_{p,q}$ induces an orientation of $\iota$ and following this orientation we can assume that the successor of $m_i$ is $m_{i+1}$ for any $i \in \Z/p\Z$. We denote by $\gamma_{m_i}^{(0)}$ the oriented arc with starting point $m_i$ and endpoint $m_{i+1}$ for any $i \in \Z/p\Z$. Finally, for any $l \geq 1$, we set $\gamma_{m_i}^{(l)} = \gamma_{m_{i+l}}^{(0)} \circ \gamma_{m_i}^{(l-1)}$. Figure \ref{fig:Cp} depicts the situation.

		\begin{figure}[H]
			\begin{center}
				\begin{tikzpicture}
					\draw (0,0) -- (8,0);
					\foreach \x in {0,2,...,8}
						\fill (\x,0) circle (.05);
					\fill (0,0) node [below] {$m_{i}$}; 
					\fill (2,0) node [below] {$m_{i+1}$}; 
					\fill (4,0) node [below] {$m_{i+2}$}; 
					\fill (6,0) node [below] {$m_{i+3}$}; 
					\fill (8,0) node [below] {$m_{i+4}$}; 

					\fill (1,0) node [below] {$\gamma_{m_i}^{(0)}$}; 
					\fill (3,0) node [below] {$\gamma_{m_{i+1}}^{(0)}$}; 
					\fill (5,0) node [below] {$\gamma_{m_{i+2}}^{(0)}$}; 
					\fill (7,0) node [below] {$\gamma_{m_{i+3}}^{(0)}$}; 
		
					\draw (0,0) arc (180:0:2); 
					\draw (0,0) arc (180:0:3); 
					\draw (0,0) arc (180:0:4);

					\fill (2,1.5) node {$\gamma_{m_i}^{(1)}$}; 
					\fill (3,2.5) node {$\gamma_{m_i}^{(2)}$}; 
					\fill (4,3.5) node {$\gamma_{m_i}^{(3)}$}; 
				\end{tikzpicture}
			\end{center}
			\caption{Arcs in $\C^\iota(C_{p,q})$}\label{fig:Cp}
		\end{figure}

		In $\Gamma(\CC_Q)$ there is an exceptional tube $\mathcal T_p$ of rank $p$. We denote by $R_i$, with $i \in \Z/p\Z$ the quasi-simple objects in $\mathcal T_p$. For any $l \geq 1$, we denote by $R_i^{(l)}$ the unique indecomposable object in $\mathcal T_p$ with quasi-socle $R_i$ and quasi-length $l$. Then the above bijection is given by $M_{\gamma_{m_i}^{(l)}} = R_i^{(l)}$ for any $i \in \Z/p\Z$ and any $l \geq 1$. We adopt the convention that $R_i^{(0)}=0$ for any $i \in \Z/p\Z$. The situation is depicted in Figure \ref{fig:tube}.

		\begin{figure}[H]
			\begin{center}
				\begin{tikzpicture}[scale = .5]
					\tikzstyle{every node}=[font=\tiny]
					\foreach \y in {0,2,...,4}
					{
						\foreach \x in {0,2,...,10}
						{
							\draw[->] (\x+1,\y+1) -- (\x+2,\y);
							\draw[->] (\x,\y) -- (\x+1,\y+1);
							\draw[->] (\x,\y+2) -- (\x+1,\y+1);
							\draw[->] (\x+1,\y+1) -- (\x+2,\y+2);
						}
					}
					\foreach \y in {6}
					{
						\foreach \x in {0,2,...,10}
						{
							\draw[->,dashed] (\x+1,\y+1) -- (\x+2,\y);
							\draw[->,dashed] (\x,\y) -- (\x+1,\y+1);
							\draw[->,dashed] (\x,\y+2) -- (\x+1,\y+1);
							\draw[->,dashed] (\x+1,\y+1) -- (\x+2,\y+2);
						}
					}
					\draw (0,0) -- (0,6);
					\draw (12,0) -- (12,6);
					\draw[dashed] (0,6) -- (0,8);
					\draw[dashed] (12,6) -- (12,8);

					\draw (0,0) circle (.075);
					\fill (0,0) node[below] {$R_0=R_0^{(1)}$};
					\draw (1,1) circle (.075);
					\fill (1.1,1.1) node[above] {$R_0^{(2)}$};
					\draw (2,2) circle (.075);
					\fill (2.1,2.1) node[above] {$R_0^{(3)}$};
					\draw (5,5) circle (.075);
					\fill (5.1,5.1) node[above] {$R_0^{(p)}$};
					\draw (8,8) circle (.075);
					\fill (8.1,8.1) node[above] {$R_0^{(l)}$};

					\draw (2,0) circle (.075);
					\fill (2,0) node[below] {$R_1$};

					\draw (10,0) circle (.075);
					\fill (10,0) node[below] {$R_{p-1}$};

					\draw (12,0) circle (.075);
					\fill (12,0) node[below] {$R_{0}$};

				\end{tikzpicture}
			\end{center}
			\caption{The tube $\mathcal T_p$}\label{fig:tube}
		\end{figure}
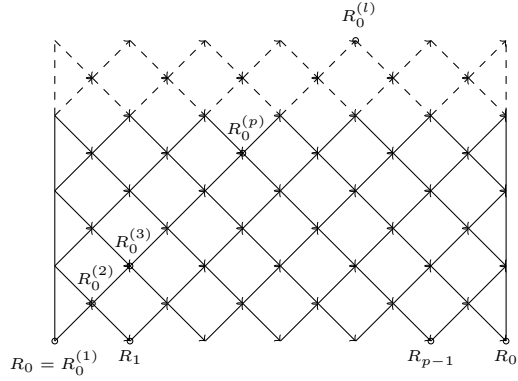
		If $q \geq 1$, one can write down a similar bijection between the set $\C^o(C_{p,q})$ of arcs whose both endpoints lie on the outside component $o$ containing $q$ points and the corresponding tube $\mathcal T_q$ of rank $q$ in $\Gamma(\CC_Q)$.

		We now compare the formula $x_?$ with the cluster character on the category $\CC_Q$.
		\begin{lem}\label{lem:xTXTCp}
			\begin{enumerate}
				\item Assume that $p >1$. Then $x_{\gamma} = X_{M_\gamma}$ for any $\gamma \in \C^\iota(C_{p,q})$.
				\item Assume that $q >1$. Then $x_{\gamma} = X_{M_\gamma}$ for any $\gamma \in \C^o(C_{p,q})$.
			\end{enumerate}
		\end{lem}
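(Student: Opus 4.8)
The plan is to prove (1); statement (2) then follows by the symmetry of $C_{p,q}$ interchanging its two boundary components, i.e.\ interchanging the roles of $p$ and $q$ and of the tubes $\mathcal T_p$ and $\mathcal T_q$. Since $M_{\gamma_{m_i}^{(l)}} = R_i^{(l)}$, the assertion to prove is that $x_{\gamma_{m_i}^{(l)}} = X_{R_i^{(l)}}$ for all $i \in \Z/p\Z$ and all $l \geq 1$, and I will argue by induction on the quasi-length $l$. For the base of the induction I would use that when $1 \leq l \leq p-1$ the curve $\gamma_{m_i}^{(l)}$ is an embedded arc (it runs at most once around $\iota$, its endpoints $m_i$ and $m_{i+l+1}$ being reached without self-crossing), while $R_i^{(l)}$ is rigid (its quasi-length is strictly smaller than the rank $p$); in this range both $x_{\gamma_{m_i}^{(l)}}$ and $X_{R_i^{(l)}}$ compute the cluster variable attached to this arc under the dictionary between arcs, rigid objects and cluster variables, so they agree. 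It remains to climb to $l \geq p$, where $\gamma_{m_i}^{(l)}$ acquires self-intersections and $R_i^{(l)}$ is no longer rigid.

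The inductive step rests on matching two three-term relations. On the geometric side I would establish the skein (Ptolemy) relation
\begin{equation}\label{eq:skein}
x_{\gamma_{m_i}^{(l)}}\, x_{\gamma_{m_{i-1}}^{(l)}} = x_{\gamma_{m_{i-1}}^{(l+1)}}\, x_{\gamma_{m_i}^{(l-1)}} + 1,
\end{equation}
with the convention $x_{\gamma_{m_i}^{(0)}} = 1$. To prove \eqref{eq:skein} I would lift both curves to the universal cover exactly as in the proof of Lemma \ref{lem:mutxtC}: there $\widetilde{\gamma_{m_i}^{(l)}}$ and $\widetilde{\gamma_{m_{i-1}}^{(l)}}$ are genuine arcs whose endpoints $\tilde m_{i-1}, \tilde m_i, \tilde m_{i+l}, \tilde m_{i+l+1}$ interleave, so the two lifts cross exactly once, and the type-$A$ Ptolemy relation resolves this single crossing into two smoothings. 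The first smoothing reconnects the consecutive pairs $\tilde m_{i-1}\tilde m_i$ and $\tilde m_{i+l}\tilde m_{i+l+1}$, which project to boundary segments and therefore contribute the factor $1$; the second produces the nested pair $\widetilde{\gamma_{m_{i-1}}^{(l+1)}}$ and $\widetilde{\gamma_{m_i}^{(l-1)}}$. Pushing the resulting identity down to $C_{p,q}$ yields \eqref{eq:skein}.

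On the representation-theoretic side I would derive the matching relation
\begin{equation}\label{eq:ar}
X_{R_i^{(l)}}\, X_{R_{i-1}^{(l)}} = X_{R_{i-1}^{(l+1)}}\, X_{R_i^{(l-1)}} + 1
\end{equation}
from the cluster character multiplication formula recalled above. I would apply it to $M = R_i^{(l)}$ and $N = \tau M = R_{i-1}^{(l)}$, for which $\Ext^1_{\CC_Q}(M,N) \cong \k$. The triangle $N \fl B' \fl M \fl N[1]$ is the almost split triangle, whose middle term is the mesh term $B' = R_{i-1}^{(l+1)} \oplus R_i^{(l-1)}$; since cluster characters are multiplicative on direct sums this contributes $X_{R_{i-1}^{(l+1)}} X_{R_i^{(l-1)}}$. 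The decisive point is the second triangle $M \fl B \fl N \fl M[1]$: in the cluster category one has $[1] \cong \tau$, so $M[1] = \tau R_i^{(l)} = R_{i-1}^{(l)} = N$, and the nonzero extension class corresponds to $\id_N$, forcing $B = 0$ and hence the summand $X_0 = 1$. This gives \eqref{eq:ar}.

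Finally, comparing \eqref{eq:skein} and \eqref{eq:ar} and invoking the induction hypothesis at levels $l-1$ and $l$ (together with the fact that $x_{\gamma_{m_i}^{(l-1)}} = X_{R_i^{(l-1)}}$ is a nonzero element of the integral domain $\Q(x_1,\dots,x_n)$), I can solve for the level-$(l+1)$ terms and conclude $x_{\gamma_{m_{i-1}}^{(l+1)}} = X_{R_{i-1}^{(l+1)}}$; letting $i$ range over $\Z/p\Z$ yields the identity for every object of quasi-length $l+1$. The step I expect to be the main obstacle is the rigorous justification of \eqref{eq:skein} for the self-intersecting curves $\gamma_{m_i}^{(l)}$ with $l \geq p$: one must check that the coloured-walk definition of $x_\gamma$ is genuinely compatible with resolving crossings in the universal cover, and that the ``consecutive'' smoothing really yields two boundary segments rather than a closed loop, so that the constant term in \eqref{eq:skein} is exactly $1$ and matches the vanishing middle term $B = 0$ produced by the identification $[1] \cong \tau$.
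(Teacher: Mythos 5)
Your overall strategy coincides with the paper's: induct on the quasi-length $l$, anchor the induction by identifying the two cluster variables at the bottom of the tube, and match a Ptolemy/skein relation proved in the universal cover against a three-term relation for cluster characters in the tube, then divide in the ambient field. The skein relation is handled exactly as in the paper (which likewise treats the covering argument as routine), and the symmetry reduction to part (1) is the same. One minor difference at the base: you justify the range $1\leq l\leq p-1$ by appealing to ``the dictionary'' between arcs, rigid objects and cluster variables, which quietly assumes that the geometric and categorical labellings agree; the paper checks this concretely at $l=1$ (which is all the induction needs) by comparing denominator vectors, namely the crossing numbers with a fixed triangulation against the dimension vector of $R_i$.

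There is, however, one step that fails as written: the claim that $\Ext^1_{\CC_Q}(M,\tau M)\cong\k$ for $M=R_i^{(l)}$. In the cluster category one has $\Ext^1_{\CC_Q}(M,\tau M)\cong\Ext^1_{\kQ}(M,\tau M)\oplus\Hom_{\kQ}(M,\tau^2 M)$; the first summand has dimension $\lceil l/p\rceil$ and the second is nonzero as soon as $l\geq p-1$ (in particular for every $l\geq 1$ when $p=2$). So for all but the smallest quasi-lengths the Ext space is at least two-dimensional, the ``two triangles'' form of the multiplication formula does not apply, and since the induction must pass through every $l$ this cannot be sidestepped. The identity you want is nevertheless correct: it is the almost-split-triangle relation $X_M X_{\tau M}=X_B+1$, with $B$ the middle term of the Auslander--Reiten triangle, which holds with no hypothesis on $\dim\Ext^1$ and is what the paper extracts from Palu's multiplication theorem (the module-category version goes back to Caldero--Chapoton). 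Replacing your two-triangle derivation by a citation of that relation (or by the general multiplication formula involving Euler characteristics of the projectivized Ext spaces on both sides) repairs the argument; the rest of your proof then goes through as in the paper.
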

		\begin{proof}
			By symmetry it is enough to prove the first point. Fix a curve in $\C^\iota(C_{p,q})$. It is of the form $\gamma_{m_i}^{(l)}$ for some $i \in \Z/p\Z$ and some $l \geq 1$. We prove the result by induction on $l$. If $l =1$ then $\gamma_{m_i}^{(1)}$ has no self-intersection so that $\gamma_{m_i}^{(1)} \in \A(C_{p,q})$. In particular, $x_{\gamma_{m_i}^{(1)}}$ and $X_{R_i^{(1)}}$ are cluster variables. In order to prove that they are the same, it is enough to consider their expressions as Laurent polynomials with respect to a fixed cluster/triangulation. We fix the following triangulation $T$ in $C_{p,q}$:
			\begin{figure}[H]
				\begin{center}
					\begin{tikzpicture}[scale = .75]
						\draw[thick] (0,0) -- (8,0);
						\fill (8.5,2) node [right] {$\iota$};
						\draw[thick] (0,2) -- (8,2);
						\fill (8.5,0) node [right] {$o$};
							
						\fill (0,2) node [above] {$\iota_1$};
						\fill (1,2) node [above] {$\iota_2$};
						\fill (7,2) node [above] {$\iota_p$};
						\fill (8,2) node [above] {$\iota_1$};

						\fill (0,0) node [below] {$o_1$};
						\fill (1,0) node [below] {$o_2$};
						\fill (7,0) node [below] {$o_q$};
						\fill (8,0) node [below] {$o_1$};

						\foreach \x in {0,1,2,6,7,8}
						{
							\fill (\x,0) circle (.05);
							\fill (\x,2) circle (.05);
						}
						\foreach \x in {3,5}
						{
							\draw[very thin,dashed] (\x,0) circle (.05);
							\draw[very thin,dashed] (\x,2) circle (.05);
						}

						\foreach \x in {0,1,2,6,7,8}
						{
							\draw[very thin] (0,2) -- (\x,0);
							\draw[very thin] (8,0) -- (\x,2);
						}
						\foreach \x in {3,5}
						{
							\draw[very thin,dashed] (0,2) -- (\x,0);
							\draw[very thin,dashed] (8,0) -- (\x,2);
						}
					\end{tikzpicture}
				\end{center}
			\end{figure}
			Then it is known that the denominator vector of $x^T_{\gamma_{m_i}^{(1)}}$ is given by the number of intersections of $\gamma_{m_i}^{(1)}$ with the arcs of $T$ \cite[Theorem 8.6]{FST:surfaces} and that the denominator vector of $ X^T_{R_i^{(1)}}$ is the dimension vector of the $\kQ$-module $R_i$ \cite[Theorem 3]{CK2}. Then, up to a cyclic permutation of the indices, the denominator vectors of $x^T_{\gamma_{m_i}^{(1)}}$ and $ X^T_{R_i^{(1)}}$ coincide. Thus, the two cluster variables are the same.

			Now assume that the result holds for $l \geq 1$. Using a standard covering argument and Ptolemy relations in the covering, it is easily verified that 
			$$x^T_{\gamma_{m_i}^{(l)}}x^T_{\gamma_{m_{i+1}}^{(l)}} = x^T_{\gamma_{m_i}^{(l+1)}}x^T_{\gamma_{m_{i+1}}^{(l-1)}}+1$$
			for any $i \in \Z/p\Z$.
			But on the other hand, it follows from \cite[Theorem 1]{Palu:multiplication} that 
			$$X^T_{R_i^{(l)}}X^T_{R_{i+1}^{(l)}} = X^T_{R_{i}^{(l+1)}}X^T_{R_{i+1}^{(l-1)}}+1$$
			for any $i \in \Z/p\Z$.
			Thus, $$x^T_{\gamma_{m_i}^{(l+1)}} = X^T_{R_{i}^{(l+1)}},$$
			which proves the induction step.
		\end{proof}

	\subsection{A formula for loops}\label{section:formula}
		Let $m \geq 1$ be an integer and let $T$ be a triangulation of $C_{p,q}$. Consider the annulus $C_{mp,mq}$ which is the $m$-fold cover of $C_{p,q}$. The triangulation $T$ induces naturally a triangulation of $C_{mp,mq}$, which is denoted by $\widetilde T$. Let $[0,1] \fl C_{mp,mq}$ denote a parametrisation of the meridian $\tilde z$ in $C_{mp,mq}$. The order in which $\tilde z$ intersects the bridging arcs of the triangulation $T$ with respect to this parametrisation induces an order on the bridging arcs of $\widetilde T$.

		\begin{defi}
			A \emph{coloured $m$-walk on $T$} is a walk of even length along the edges of the triangulation $\widetilde T$ and along the boundary components of $C_{mp,mq}$, whose edges are decorated with alternating $+$ and $-$ signs and such that:
			\begin{itemize}
				\item [\textbf{(P1)}] The walk is homotopic to $\tilde z$.
				\item [\textbf{(P2)}] Every edge decorated with a $-$ is a bridging arc of $T$.
				\item [\textbf{(P3)}] The walk goes forward in the sense that if two bridging arcs $\alpha$ and $\beta$ appear in the walk in this order, then $\alpha$ strictly precedes $\beta$ in the order induced by the parametrisation of $\tilde z$.
			\end{itemize}
			We denote by $\mathcal W^T_{z_m}$ the set of coloured $m$-walks on the triangulation $T$ in $C_{p,q}$.
		\end{defi} 

		\begin{defi}
			For any triangulation $T$ of $C_{p,q}$ and any $m \geq 1$, the Laurent polynomial $x^T_{z_m}$ in the cluster $T$ is the sum over all coloured $m$-walks $p$ on $T$ satisfying \textbf{(P1)}--\textbf{(P3)}, of the product $x(w)$ of the cluster variables in $T$ corresponding to $+$ edges in $w$, divided by the product of the cluster variables in $T$ corresponding to $-$ edges in $w$, with the convention that boundary arcs contribute as 1. In other words,
			$$x^T_{z_m} = \sum_{w \in \mathcal W^T_{z_m}} x(w).$$
		\end{defi}

		\begin{exmp}\label{exmp:Kronecker}
			We consider the triangulation $T$ of $C_{1,1}$ depicted in Figure \ref{fig:z1}. The coloured 1-walks on $T$ are depicted in Figure \ref{fig:z1} where edges with a negative colour appear in blue and edges with a positive colour appear in red.
			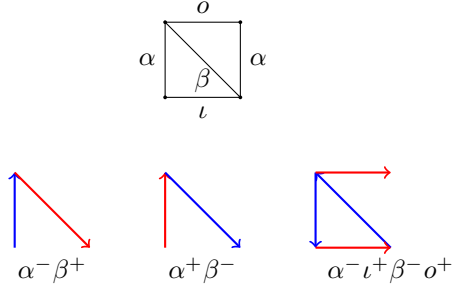
\begin{figure}[H]
				\begin{center}
					\begin{tikzpicture}[scale=.5]
						\draw (4,0) -- (6,0);
						\draw (4,2) -- (6,2);
						\foreach \x in {4,6}
						{
							\fill (\x,0) circle (.05);
							\fill (\x,2) circle (.05);
							
							\draw (\x,2) -- (\x,0);
						}
						\fill (4,1) node [left] {$\alpha$};
						\fill (6,1) node [right] {$\alpha$};

						\foreach \x in {4}
						{
							\draw (\x,2) -- (\x+2,0);
							\fill (\x+1,1) node [below] {$\beta$};
							\fill (1+\x,0) node [below] {$\iota$};
							\fill (1+\x,2) node [above] {$o$};
						}
						
						\draw[thick,blue,->] (0,-4) -- (0,-2);
						\draw[thick,red,->] (0,-2) -- (2,-4);
						\fill (1,-4) node [below] {$\alpha^- \beta^+$};

						\draw[thick,red,->] (4,-4) -- (4,-2);
						\draw[thick,blue,->] (4,-2) -- (6,-4);
						\fill (5,-4) node [below] {$\alpha^+\beta^- $};

						\draw[thick,blue,->] (8,-2) -- (8,-4);
						\draw[thick,blue,->] (10,-4) -- (8,-2);
						\draw[thick,red,->] (8,-2) -- (10,-2);
						\draw[thick,red,->] (8,-4) -- (10,-4);
						\fill (10,-4) node [below] {$\alpha^-\iota^+ \beta^- o^+$};
					\end{tikzpicture}
				\end{center}
				\caption{Coloured 1-walks on $\ens{\alpha,\beta}$ in type $\widetilde A_{1,1}$.}\label{fig:z1}
			\end{figure}
			
			Thus, we have 
			$$x^T_{z} = \frac{x_{\beta}}{x_{\alpha}} + \frac{x_{\alpha}}{x_{\beta}} + \frac{1}{x_\alpha x_\beta} = \frac{1+x_\alpha^2 + x_\beta^2}{x_\alpha x_\beta}.$$

			For $m=2$, the two-fold covering of $C_{1,1}$ is
			\begin{center}
				\begin{tikzpicture}[scale=.5]
					\draw (3,0) -- (7,0);
					\draw (3,2) -- (7,2);
					\foreach \x in {3,5,7}
					{
						\fill (\x,0) circle (.05);
						\fill (\x,2) circle (.05);
						
						\draw (\x,2) -- (\x,0);
					}
					\fill (3.1,1.5) node [left] {$\alpha$};
					\fill (5.1,1.5) node [left] {$\alpha$};
					\fill (6.9,1.5) node [right] {$\alpha$};

					\foreach \x in {3,5}
					{
						\draw (\x,2) -- (\x+2,0);
						\fill (\x+1,1) node [below] {$\beta$};
						\fill (1+\x,0) node [below] {$\iota$};
						\fill (1+\x,2) node [above] {$o$};
					}
				\end{tikzpicture}
			\end{center}
			
			Thus, the set of coloured 2-walks on $T$ consists of the following elements:
			\begin{itemize}
				\item $\alpha^-\iota^+\alpha^-o^+$
				\item $\alpha^-o^+\alpha^-\iota^+$
				\item $\beta^-\iota^+\beta^-o^+$
				\item $\beta^-o^+\beta^-\iota^+$
				\item $\alpha^-\beta^+\alpha^-\beta^+$
				\item $\beta^-\alpha^+\beta^-\alpha^+$
				\item $\iota^+ \beta^- o^+ \alpha^-\iota^+ \beta^- o^+ \alpha^-$.
			\end{itemize}
			Thus, we have 
			$$x^T_{z_2} = \frac{2}{x_\alpha^2} + \frac{2}{x_\beta^2} + \frac{x_\beta^2}{x_\alpha^2} + \frac{x_\alpha^2}{x_\beta^2}+ \frac{1}{x_\alpha^2 x_\beta^2} = (x^T_z)^2 -2 = F_2(x^T_z).$$
		\end{exmp}

	\subsection{Invariance under cluster change}

		\begin{lem}\label{lem:mutXtz}
			Let $T$ and $T'$ be two triangulations of $C_{p,q}$. Then for any $\gamma$ in $\hat \A(C_{p,q})$, we have $x^T_{\gamma} = x^{T'}_\gamma$.
		\end{lem}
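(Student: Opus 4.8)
The plan is to prove invariance of $x^T_\gamma$ under change of triangulation for all $\gamma \in \hat\A(C_{p,q})$ by reducing to cases already handled and treating the genuinely new case, namely that of loops $z_m$. For $\gamma \in \A(C_{p,q}) \subseteq \C(C_{p,q})$, invariance is already Lemma \ref{lem:mutxtC}, so nothing new is needed there. The entire content of the lemma is therefore the statement $x^T_{z_m} = x^{T'}_{z_m}$ for every $m \geq 1$ and every pair of triangulations $T, T'$. First I would reduce to the case where $T$ and $T'$ differ by a single flip, since any two triangulations of $C_{p,q}$ are connected by a sequence of flips; establishing invariance across one flip then gives the general statement by composition.

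The key idea is to lift to the $m$-fold cover. By the very definition of the coloured $m$-walks, $x^T_{z_m}$ is computed as a sum over walks on the induced triangulation $\widetilde T$ of $C_{mp,mq}$ that are homotopic to the meridian $\tilde z$. The natural thing is to interpret $x^T_{z_m}$ as (essentially) the $\widetilde T$-expansion of the element attached to $\tilde z$ in the cover, via the combinatorial formula: the alternating $\pm$ decorations with $-$ on bridging arcs and the product rule ($+$ edges in the numerator, $-$ edges in the denominator, boundary arcs contributing $1$) is exactly the recipe $p(w)$ adapted to a closed curve. Thus I would argue that $x^T_{z_m}$ coincides with the image, under the covering projection $C_{mp,mq} \to C_{p,q}$, of the expansion of the meridian curve $\tilde z$ computed in the covering annulus with respect to $\widetilde T$. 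A flip $T \rightsquigarrow T'$ in $C_{p,q}$ lifts to a simultaneous family of $m$ flips $\widetilde T \rightsquigarrow \widetilde{T'}$ in the cover, all compatible with the deck transformation.

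With this reformulation in hand, invariance reduces to invariance of the expansion of $\tilde z$ in the cover under the lifted flips, which is an instance of the already-established invariance for curves in a marked surface. The cleanest route is to mimic the proof of Lemma \ref{lem:mutxtC}: fix a lift of $\tilde z$ to the universal cover of $C_{mp,mq}$, let $P$ be the union of triangles of the lifted $\widetilde T$ meeting this lift, note that only edges from $P$ enter the expansion, and compare $\widetilde T$ and $\widetilde{T'}$ inside a disc $D$ containing $P$ where the type $A$ result applies. Care must be taken because $\tilde z$ is a closed curve and not an arc between two marked points, so the universal-cover argument of Lemma \ref{lem:mutxtC} does not apply verbatim; I would instead cut $\tilde z$ at a point where it meets an arc common to both $\widetilde T$ and $\widetilde{T'}$ (such an arc exists since a single flip changes only one $\Z/m$-orbit of arcs, leaving bridging arcs available), turning the comparison into one for a curve between marked points in a disc.

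The main obstacle I expect is precisely the bookkeeping between the closed-curve combinatorics of the coloured $m$-walks and the marked-point combinatorics of the coloured $\gamma$-walks of Section \ref{ss:combform}: one must verify that the order condition \textbf{(P3)} induced by the parametrisation of $\tilde z$ matches condition (5) on coloured walks when $\tilde z$ is cut open into a curve between two marked points, and that the alternating-sign/bridging-arc conditions \textbf{(P1)}--\textbf{(P2)} correspond exactly to the odd/even-length and crossing conditions (3)--(4). Once this dictionary is set up correctly, the invariance statement follows from Lemma \ref{lem:mutxtC} applied in the covering annulus together with the fact that the covering projection carries the $\widetilde T$-expansion of $\tilde z$ to the $T$-expansion of $z_m$ in a triangulation-independent way.
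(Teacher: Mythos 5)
Your reduction to the case of loops and to a single flip matches the paper's, and the observation that $x^T_{z_m}$ is the projection of the corresponding sum for the simple loop $\tilde z$ in the $m$-fold cover is sound (the paper uses exactly this projection, via a ring homomorphism $\pi$, at the end of the proof of Lemma \ref{lem:FmXdelta}). But the core step --- invariance of the loop expansion under a single flip --- is not actually established by your argument. The device you propose, cutting $\tilde z$ at a point where it meets an arc common to both triangulations so as to invoke Lemma \ref{lem:mutxtC}, does not work as stated: the resulting curve has its endpoints in the interior of the surface (in the interior of an arc), not at marked points, so it is not an element of $\C$ of any marked surface, the expansion formula of \cite{ST:unpunctured} is not defined for it, and Lemma \ref{lem:mutxtC} does not apply. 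The ``dictionary'' you defer to the end --- matching the even-length, cyclic, forward-ordered walks of \textbf{(P1)}--\textbf{(P3)} with the odd-length, endpoint-to-endpoint walks of conditions (1)--(5) --- is not bookkeeping; it is precisely the missing mathematical content. There is no arc between marked points whose expansion equals $x^T_{z_m}$; the closest true statement, $x^T_{z_m}=x^T_{M_m}-x^T_{N_m}$ for suitable winding peripheral arcs $M_m,N_m$, is a \emph{difference} of two such expansions, and the paper proves it only for one particular triangulation, using the present lemma as an input.

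The paper's proof avoids all of this by working directly with the coloured $m$-walks: it classifies single flips into three types (peripheral arc to peripheral arc, peripheral to bridging, bridging to bridging) and in each case exhibits an explicit substitution bijection between walk sets (for instance replacing $\sigma^+$ by $\delta^+\tau^-\beta^+$ and by $\alpha^+\tau^-\gamma^+$, cancelling immediately repeated edges) which realises the local exchange relation term by term. To salvage your approach you would need either to carry out such a bijection yourself, or to first prove $x^T_{z_m}=x^T_{M_m}-x^T_{N_m}$ for \emph{every} triangulation $T$ and then deduce invariance from Lemma \ref{lem:mutxtC} applied to $M_m$ and $N_m$; either way, the work you have deferred is essentially the whole proof.
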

		\begin{proof}
			For $\gamma$ an arc (not a loop), as we have already discussed, $x^T_\gamma$ and $x^{T'}_\gamma$ are two different expansions of the same cluster variable, so they are equal.  Thus, we only need to prove that $x^T_{z_m} = x^{T'}_{z_m}$ for any $m \geq 1$. Since any two triangulations are related by a sequence of mutations, it is enough to prove it in the case when $T$ and $T'$ are related by a single mutation.

			Suppose first that the mutation replaces a peripheral arc by another peripheral arc. This means that all the arcs of the quadrilateral in which the mutation takes place are peripheral. It follows that neither of the edges being mutated can appear in the formulas for $x^T_{z_m}$ and $x^{T'}_{z_m}$, so in this case we are done.

			Next suppose that the mutation replaces a peripheral arc $\sigma$ by a bridging arc $\tau$, and suppose the other four arcs are labelled $\alpha, \beta, \gamma, \delta$ as shown in Figure \ref{fig:mutation1} below.
			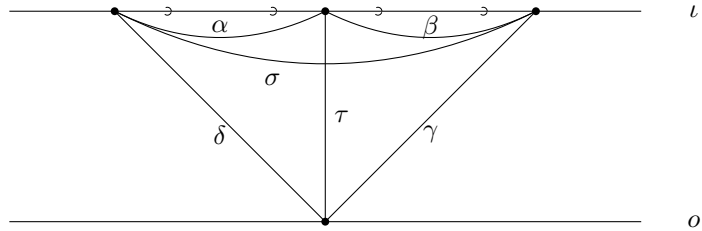
\begin{figure}[H]
				\begin{center}
					\begin{tikzpicture}[scale = .7]
						\draw (0,0) -- (12,0);
						\draw (0,4) -- (12,4);
						\fill (13,0) node {$o$};
						\fill (13,4) node {$\iota$};

						\fill (2,4) circle (.075);
						\draw[very thin, dashed] (3,4) circle (.075);
						\draw[very thin, dashed] (5,4) circle (.075);
						\fill (6,4) circle (.075);
						\draw[very thin, dashed] (7,4) circle (.075);
						\draw[very thin, dashed] (9,4) circle (.075);
						\fill (10,4) circle (.075);

						\fill (6,0) circle (.075);

						\draw (2,4) -- (6,0);
						\draw (6,4) -- (6,0);
						\draw (10,4) -- (6,0);

						\draw plot[domain=-2:2] ({\x+4},{3.5+2*(\x)^2/(4^2)});
						\draw plot[domain=-2:2] ({\x+8},{3.5+2*(\x)^2/(4^2)});
						\draw plot[domain=-4:4] ({\x+6},{3+(\x)^2/(4^2)});

						\fill (4,2) node [left, below] {$\delta$};
						\fill (8,2) node [right, below] {$\gamma$};
						\fill (4,4) node [below] {$\alpha$};
						\fill (8,4.1) node [below] {$\beta$};
						\fill (5,3) node [below] {$\sigma$};
						\fill (6,2) node [right] {$\tau$};
					\end{tikzpicture}
				\end{center}
				\caption{Replacing a peripheral arc by a bridging arc}\label{fig:mutation1}
			\end{figure}

			Suppose also that the coloured $m$-walk passes locally from left to right in the diagram. Since $\sigma$ is peripheral, it is decorated with a $+$.

			Each coloured $m$-walk on $T$ which includes $\sigma$ induces two coloured $m$-walks on $T'$ where $\sigma$ is replaced respectively by $\delta \tau \beta$ and by $\alpha \tau \gamma$. (If these replacements result in some edge being used twice consecutively in opposite directions, we cancel them out.) We claim that any coloured $m$-walk on $T'$ is obtained from a coloured $m$-walk on $T$ in this way. Suppose we start with a coloured $m$-walk on $T'$ containing $\tau$. It can only occur with a $-$ decoration, since on one end it is not adjacent to any bridging edges. Each coloured $m$-walk $w$ on $T'$ which includes $\tau$ from $\iota$ to $o$ must be of the form $\cdots \alpha^+ \tau^- \cdots$. Replacing $\alpha^+ \tau^-$ by $\sigma^+ \gamma^-$ in this expression, we obtain the desired coloured $m$-walk on $T$ inducing $w$. If $w$ uses $\tau$ from $o$ to $\iota$, then it must be of the form $\cdots \tau^- \beta^+ \cdots$. Replacing $\tau^- \beta^+$ by $\delta^- \sigma^+$ provides the desired coloured $m$-walk on $T$. This proves the claim.

			Finally, suppose that the mutation replaces one bridging $\sigma$ in $T$ arc by another bridging arc $\tau$ in $T'$, and suppose the other four arcs are labelled $\alpha, \beta, \gamma, \delta$ as shown in Figure \ref{fig:mutation2} below.
			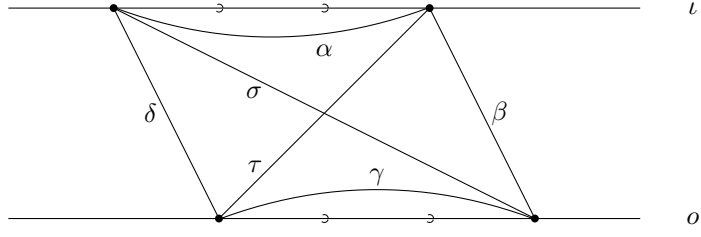
\begin{figure}[H]
				\begin{center}
					\begin{tikzpicture}[scale = .7]
						\draw (0,0) -- (12,0);
						\draw (0,4) -- (12,4);
						\fill (13,0) node {$o$};
						\fill (13,4) node {$\iota$};

						\fill (2,4) circle (.075);
						\draw[very thin, dashed] (4,4) circle (.075);
						\draw[very thin, dashed] (6,4) circle (.075);
						\fill (8,4) circle (.075);

						\fill (4,0) circle (.075);
						\draw[very thin, dashed] (6,0) circle (.075);
						\draw[very thin, dashed] (8,0) circle (.075);
						\fill (10,0) circle (.075);

						\draw (2,4) -- (4,0);
						\draw (2,4) -- (10,0);
						\draw (8,4) -- (10,0);
						\draw (8,4) -- (4,0);

						\draw plot[domain=-3:3] ({\x+5},{3.45+(\x)^2/(4^2)});
						\draw plot[domain=-3:3] ({\x+7},{.55-(\x)^2/(4^2)});

						\fill (3,2) node [left] {$\delta$};
						\fill (9,2) node [right] {$\beta$};
						\fill (5,1) node [left] {$\tau$};
						\fill (5,2.4) node [left] {$\sigma$};
						\fill (6,3.2) node {$\alpha$};
						\fill (7,.8) node {$\gamma$};
					\end{tikzpicture}
				\end{center}
				\caption{Replacing a bridging arc by another}\label{fig:mutation2}
			\end{figure}

			Assume first that $w$ is a coloured $m$-walk on $T$ in which $\sigma$ arises with a $+$ decoration. Replacing $\sigma^+$ by $\alpha^+ \tau^- \gamma^+$ and $\delta^+ \tau^- \beta^+$ provides two coloured $m$-walks on $T'$. We claim that every coloured $m$-walk on $T'$ including $\tau$ with a $-$ sign is obtained in this way. Fix thus a coloured $m$-walk on $T'$ with $\tau$ appearing with a $-$ sign. If $\tau$ is crossing from $o$ to $\iota$, then it must be of the form $\alpha^+ \tau^- \gamma^+$. If $\tau$ is crossing from $\iota$ to $o$ then consider the coloured $m$-walk on $T$ obtained by replacing $\tau^-$ by $\delta^- \sigma^+ \beta^-$. Then the coloured $m$-walk on $T'$ is obtained by replacing $\sigma^+$ by $\delta^+ \tau^- \beta^+$, which proves the claim. The same argument works for walks with $\tau$ having a $+$ decoration (and with $\sigma$ decorated with a $-$). 
		\end{proof}

		Thus, for any $\gamma \in \hat \A(C_{p,q})$ we simply write $x_\gamma$ for the element in $x^T_\gamma$ and for any collection $\Gamma$ of elements in $\hat \A(C_{p,q})$, we set $x_\Gamma = \prod_{\gamma \in \Gamma} x_\gamma$.

	\subsection{From loops to elements in $\mathcal B_Q$}
		\begin{lem}\label{lem:FmXdelta}
			For any $m \geq 1$, we have $x_{z_m} = F_m(X_\delta)$.
		\end{lem}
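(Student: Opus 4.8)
The plan is to prove the identity $x_{z_m} = F_m(X_\delta)$ by induction on $m$, using the Chebyshev recursion $F_{m+1}(z) = z F_m(z) - F_{m-1}(z)$ as the structural backbone. The natural base cases are $m=1$ and $m=2$. For $m=1$, I would first establish that $x_z = X_\delta$: the loop $z=z_1$ has no self-intersections, and the computation in Example \ref{exmp:Kronecker} (specialised appropriately) together with the combinatorial formula for $x^T_{z_1}$ should match the Laurent expansion of the generic variable $X_\delta$ associated to a quasi-simple module in a homogeneous tube. The identity $x_{z_2} = F_2(X_\delta) = (X_\delta)^2 - 2$ is already verified in the rank-one case in Example \ref{exmp:Kronecker}; the induction will make this pattern systematic.

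The key step will be to prove a \emph{multiplication formula at the level of loops}, analogous to the $X^T_{R_i^{(l)}}$ recursion used in Lemma \ref{lem:xTXTCp}. Concretely, I would establish
$$x_{z_1} \cdot x_{z_m} = x_{z_{m+1}} + x_{z_{m-1}}$$
for all $m \geq 1$, with the convention $x_{z_0} = 2$. Once this is in hand, comparing with the defining recursion $F_{m+1}(X_\delta) = X_\delta F_m(X_\delta) - F_{m-1}(X_\delta)$ and using $x_{z_1}=X_\delta$ gives the result immediately by induction: assuming $x_{z_m}=F_m(X_\delta)$ and $x_{z_{m-1}}=F_{m-1}(X_\delta)$, we get
$$x_{z_{m+1}} = x_{z_1} x_{z_m} - x_{z_{m-1}} = X_\delta F_m(X_\delta) - F_{m-1}(X_\delta) = F_{m+1}(X_\delta).$$

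To prove the loop multiplication formula, I would work in the combinatorial model of coloured $m$-walks and exploit the covering geometry set up in Section \ref{section:formula}. The product $x_{z_1} x_{z_m}$ corresponds to concatenating a coloured $1$-walk with a coloured $m$-walk in the appropriate cover; the two terms $x_{z_{m+1}}$ and $x_{z_{m-1}}$ should arise from the two ways of resolving the (single) crossing between the meridian $\tilde z$ at level $1$ and the meridian at level $m$, exactly as a skein/Ptolemy-type smoothing. The convention $x_{z_0}=2$ reflects that resolving a loop against itself with one crossing yields a contractible component counted with the factor $2$, matching $F_0(z)=2$. An alternative, cleaner route would bypass the skein argument entirely: use Lemma \ref{lem:mutXtz} to fix a convenient triangulation, then invoke the already-established multiplication theorem \cite{Palu:multiplication} on the cluster category side, transporting the relation $X_\delta F_m(X_\delta)=F_{m+1}(X_\delta)+F_{m-1}(X_\delta)$ through the identification $x_{z_1}=X_\delta$.

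The main obstacle I anticipate is establishing the loop multiplication formula rigorously in the combinatorial model, i.e. matching coloured $m$-walks with Chebyshev data. The subtlety is that loops are closed curves rather than arcs with fixed endpoints, so the bijection between concatenated walks and the walks for $z_{m\pm 1}$ is not a direct transcription of the Ptolemy relation used for arcs in Lemma \ref{lem:xTXTCp}; one must carefully track the cyclic/homotopy condition \textbf{(P1)} and the forward-monotonicity condition \textbf{(P3)} when the crossing is resolved, and verify that the degenerate resolution contributes the correct constant $2$. I expect this bookkeeping, rather than the inductive algebra, to be the technical heart of the argument, and the cluster-category route via \cite{Palu:multiplication} to be the more economical way to sidestep it.
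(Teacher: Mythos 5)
Your overall strategy (induction on $m$ via the three-term Chebyshev recursion) is genuinely different from the paper's, but as written it has two gaps, one of which is serious. First, the base case $x_{z_1}=X_\delta$ is asserted rather than proved. Since $z$ is a closed loop, $x^T_z$ is a purely combinatorial sum over coloured $1$-walks, while $X_\delta$ is the cluster character of a quasi-simple module in a homogeneous tube; there is no a priori object of $\CC_Q$ whose character is $x^T_z$, and one cannot match the two by denominator vectors as in Lemma \ref{lem:xTXTCp} because $x^T_z$ is not a cluster variable. Establishing $x_z=X_\delta$ already requires essentially the full force of the argument needed for general $m$. Second, and more importantly, your key identity $x_{z_1}x_{z_m}=x_{z_{m+1}}+x_{z_{m-1}}$ is exactly where the difficulty lives, and neither of your proposed routes closes it. The skein-resolution argument on coloured $m$-walks is only sketched, and you yourself flag the bookkeeping of \textbf{(P1)} and \textbf{(P3)} as unresolved. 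The alternative via \cite{Palu:multiplication} is circular: that multiplication theorem applies to cluster characters $X_M$ of objects of $\CC_Q$, whereas $x_{z_m}$ for $m\geq 2$ is defined combinatorially and is not known to be the character of anything until the present lemma is proved; the relation $X_\delta F_m(X_\delta)=F_{m+1}(X_\delta)+F_{m-1}(X_\delta)$ is just the Chebyshev recursion evaluated at $X_\delta$ and says nothing about the $x_{z_m}$.

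For contrast, the paper avoids any product formula for loops. It writes $x^T_{z_m}$ as a difference $x^T_{M_m}-x^T_{N_m}$, where $M_m=R_1^{(mp)}$ and $N_m=R_2^{(mp-1)}$ are curves in $\C^\iota(C_{p,q})$ (with endpoints, hence covered by Lemma \ref{lem:xTXTCp}), via an explicit decomposition of the set of coloured walks for $M_m$ into those coming from $z_m$ and those coming from $N_m$; it then imports the identity $F_m(X_\delta)=X^T_{M_m}-X^T_{N_m}$ from the higher difference property of \cite{Dupont:transverse}, with a separate covering argument for the Kronecker case $p=q=1$ where no exceptional tube is available. If you want to salvage your induction, you would need to prove the loop product formula by an explicit walk bijection, which is likely at least as much work as the paper's difference bijection.
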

		\begin{proof}
			We first assume that $p+q \geq 3$ so that $Q$ is not the Kronecker quiver and by symmetry, we can assume that $p>1$.

			It follows from Lemma \ref{lem:mutXtz} that it is enough to prove the lemma for a particular choice of triangulation $T$. We fix an integer $m \geq 1$.

			We consider the following triangulation $T$ of $C_{p,q}$ (viewed in the universal cover):
			\begin{figure}[H]
				\begin{center}
					\begin{tikzpicture}[scale = .75]
						\draw[thick] (0,0) -- (8,0);
						\fill (8.5,2) node [right] {$\iota$};
						\draw[thick] (0,2) -- (8,2);
						\fill (8.5,0) node [right] {$o$};
							
						\fill (0,2) node [above] {$\iota_1$};
						\fill (1,2) node [above] {$\iota_2$};
						\fill (7,2) node [above] {$\iota_p$};
						\fill (8,2) node [above] {$\iota_1$};

						\fill (0,0) node [below] {$o_1$};
						\fill (1,0) node [below] {$o_2$};
						\fill (7,0) node [below] {$o_q$};
						\fill (8,0) node [below] {$o_1$};

						\foreach \x in {0,1,2,6,7,8}
						{
							\fill (\x,0) circle (.05);
							\fill (\x,2) circle (.05);
						}
						\foreach \x in {3,5}
						{
							\draw[very thin,dashed] (\x,0) circle (.05);
							\draw[very thin,dashed] (\x,2) circle (.05);
						}

						\foreach \x in {0,1,2,6,7,8}
						{
							\draw[very thin] (0,2) -- (\x,0);
							\draw[very thin] (8,0) -- (\x,2);
						}
						\foreach \x in {3,5}
						{
							\draw[very thin,dashed] (0,2) -- (\x,0);
							\draw[very thin,dashed] (8,0) -- (\x,2);
						}
					\end{tikzpicture}
				\end{center}
			\end{figure}

			We denote by $M_m$ the object in $\CC_Q$ corresponding to $\gamma_{\iota_1}^{(mp)}$, which, with the previous notations amounts to saying that $M_m = R_1^{(mp)}$ in the tube $\mathcal T_p$. We set $N_m = R_2^{(mp-1)}$. It follows from the so-called \emph{higher difference properties} proved in \cite[Proposition 3.3]{Dupont:transverse} that 
			$$F_m(X_{\delta}) = X^T_{M_m} - X^T_{N_m}.$$

			We now prove the analogous identity for the formula $x^T$. Identifying objects in $\mathcal T_p$ with arcs in $\C^\iota(C_{p,q})$, we are in the situation depicted in Figure \ref{fig:MandNasarcs}.
			\begin{figure}[H]
				\begin{center}
					\begin{tikzpicture}[scale = .7]
					
						\draw (0,0) -- (5,0);
						\draw (0,3) -- (5,3);

						\draw[very thin, dashed] (5,0) -- (10,0);
						\draw[very thin, dashed] (5,3) -- (10,3);

						\draw (10,0) -- (16,0);
						\draw (10,3) -- (16,3);

						\foreach \x in {0,5,...,15}
						{
							\fill (\x,0) circle (.075);
							\fill (\x,3) circle (.075);
						}
						\foreach \x in {1,11,16}
						{
							\fill (\x,0) circle (.075);
							\fill (\x,3) circle (.075);
						}
						\foreach \x in {4,14}
						{
							\fill (\x,0) circle (.075);
							\fill (\x,3) circle (.075);
						}

						\foreach \x in {2,3,6,7,8,9,12,13}
						{
							\draw[very thin, dashed] (\x,0) circle (.075);
							\draw[very thin, dashed] (\x,3) circle (.075);
						}

						\foreach \x in {0,5,10,15}
						{
							\fill (\x,0) node [below] {$o_1$};
							\fill (\x,3) node [above] {$\iota_1$};
						}

						\foreach \x in {0,10,15}
						{
							\fill (\x+1,0) node [below] {$o_2$};
							\fill (\x+1,3) node [above] {$\iota_2$};
						}

						\foreach \x in {5,15}
						{
							\fill (\x-1,0) node [below] {$o_{q}$};
							\fill (\x-1,3) node [above] {$\iota_{p}$};
						}

						\foreach \x in {0,15}
						{
							\draw (\x,0) -- (\x,3);
							\draw (\x,3) -- (\x+1,0);

							\fill (\x+.1,1.5) node [left] {$\alpha$};
							\fill (\x+.45,1.5) node [right] {$\beta$};

						}
						\foreach \x in {5,10}
						{
							\draw[very thin, dashed] (\x,0) -- (\x,3);
						}

						\foreach \x in {0,10}
						{
							\draw (\x,3) -- (\x+5,0);
							\draw (\x+1,3) -- (\x+5,0);
							\draw[very thin, dashed] (\x+2,3) -- (\x+5,0);
							\draw[very thin, dashed] (\x+3,3) -- (\x+5,0);
							\draw (\x+4,3) -- (\x+5,0);

							\draw (\x,3) -- (\x+1,0);
							\draw[very thin, dashed] (\x,3) -- (\x+2,0);
							\draw[very thin, dashed] (\x,3) -- (\x+3,0);
							\draw (\x,3) -- (\x+4,0);
						}
						\foreach \x in {5}
						{
							\draw[very thin,dashed] (\x,3) -- (\x+5,0);
							\draw[very thin,dashed] (\x+1,3) -- (\x+5,0);
							\draw[very thin,dashed] (\x+2,3) -- (\x+5,0);
							\draw[very thin,dashed] (\x+3,3) -- (\x+5,0);
							\draw[very thin,dashed] (\x+4,3) -- (\x+5,0);

							\draw[very thin,dashed] (\x,3) -- (\x+1,0);
							\draw[very thin,dashed] (\x,3) -- (\x+2,0);
							\draw[very thin,dashed] (\x,3) -- (\x+3,0);
							\draw[very thin,dashed] (\x,3) -- (\x+4,0);
						}

						\draw[thick] plot[domain=-8:8] ({\x+8},{2.25*(1-(\x)^2/(8^2))});
						\draw[thick] plot[domain=-7:7] ({\x+8},{1-(\x)^2/(7^2)});	

						\draw (8,1.25) node[above,fill=white] {$M_m$}; 
						\draw (8,0) node[above,fill=white] {$N_m$}; 
					\end{tikzpicture}
				\end{center}
				\caption{Arc representations of $M_m$ and $N_m$}\label{fig:MandNasarcs}
			\end{figure}
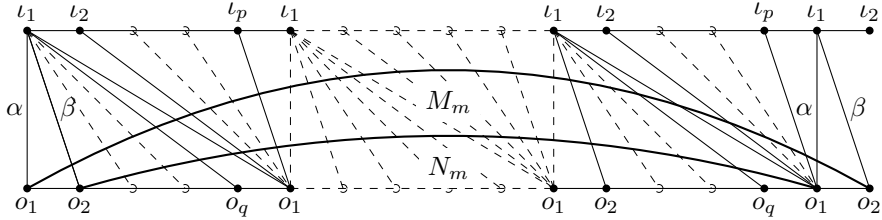

			As before, we denote by $\mathcal W^T_{N_m}$ the set of coloured walks on $T$ which are considered in the formula $x^T_{N_m}$ and by $\mathcal W^T_{M_m}$ the set of coloured walks on $T$ which are considered in the formula $x^T_{M_1}$. As in Figure \ref{fig:MandNasarcs}, we denote by $\alpha$ the edge of $T$ joining $o_1$ to $\iota_1$ and by $\beta$ the edge of $T$ joining $\iota_1$ to $o_2$. Since in the formula for $x^T$ boundary components contribute as 1, in order to simplify notations we will always denote by $o$ a boundary segment on $o$ and by $\iota$ a boundary segment on $\iota$.

			For any coloured walk $w$ in $\mathcal W^T_{N_m}$, the coloured walk $\alpha^+\beta^- w \alpha^- \beta^+$ is in $\mathcal W^T_{M_m}$ and the respective contributions in $x^T_{N_m}$ and $x^T_{M_m}$ are the same. 

			We denote by $\mathcal W^T_{z_m}$ the set of $m$-coloured walks on $T$ (i.e., those which are considered in the formula $x^T_{z_m}$). We denote by $\mathcal W^{T;\iota}_{z_m}$ the set of coloured walks in $\mathcal W^T_{z_m}$ going through the first lift of $\iota_1$ in the $m$-fold cover $C_{mp,mq}$ and by $\mathcal W^{T;o}_{z_m}$ its complement in $\mathcal W^T_{z_m}$ (which consists of walks passing through the first lift of $o_1$ but not all such walks). 

			If $w \in \mathcal W^{T;o}_{z_m}$ then $wo^+ \in \mathcal W^T_{M_m}$ and the respective contributions in $x^T_{z_m}$ and $x^T_{M_m}$ are the same.
			If $w \in \mathcal W^{T;\iota}_{z_m}$, then $o^+\beta^-w\beta^+ \in \mathcal W^T_{M_m}$ and the respective contributions in $x^T_{z_m}$ and $x^T_{M_m}$ are the same.
			Moreover, we have 
			$$\mathcal W^T_{M_m} = o^+\beta^-(\mathcal W^{T;\iota}_{z_m}) \beta^+ \sqcup (\mathcal W^{T;o}_{z_m}) o^+ \sqcup \alpha^+\beta^-(\mathcal W^T_{N_m}) \alpha^-\beta^+.$$

			Thus, 
			\begin{align*}
				x^T_{M_m} 
					& = \sum_{w \in \mathcal W^T_{M_m}} x(w) \\
					& = \sum_{w \in \mathcal W^{T;\iota}_{z_m}} x(o^+\beta^-w\beta^+) + \sum_{w \in \mathcal W^{T;o}_{z_m}} x(wo^+) + \sum_{w \in \mathcal W^T_{N_m}} x(\alpha^+\beta^-w\alpha^-\beta^+)\\
					& = \sum_{w \in \mathcal W^{T;\iota}_{z_m}} x(w) + \sum_{w \in \mathcal W^{T;o}_{z_m}} x(w) + \sum_{w \in \mathcal W^T_{N_m}} x(w)\\
					& = \left( \sum_{w \in \mathcal W^{T;\iota}_{z_m}} x(w) + \sum_{w \in \mathcal W^{T;o}_{z_m}} x(w) \right) + \sum_{w \in \mathcal W^T_{N_m}} x(w)\\
					& = x^T_{z_m} + x^T_{N_m}.
			\end{align*}

			Now, since $M_m$ and $N_m$ are curves in $\C^\iota(C_{p,q})$, we can apply Lemma \ref{lem:xTXTCp} and we get
			$$x^T_{z_m} = x^T_{M_m} - x^T_{N_m} = X^T_{M_m} - X^T_{N_m} = F_m(X_\delta)$$
			and the lemma is proved for $p+q \geq 3$.

			Assume now that $p=q=1$ and consider the same triangulation $T$ as in Figure \ref{fig:z1}. Consider the annulus $C_{2,1}$ equipped with the triangulation $T'$ containing two bridging edges $\alpha$ and $\beta$ and one peripheral edge $\tau$, as in Figure \ref{fig:C21} below. 
			\begin{figure}[H]
				\begin{center}
					\begin{tikzpicture}[scale = .75]
						\draw (0,2) -- (4,2);
						\draw (0,0) -- (4,0);

						\foreach \x in {0,2,4}
						{
							\fill (\x,0) circle (.05);
						}
						\foreach \x in {0,4}
						{
							\fill (\x,2) circle (.05);
						}

						\draw (0,2) -- (0,0);
						\draw (0,2) -- (4,0);
						\draw plot[domain=0:4] ({\x},{.25-(\x-2)^2/16});
						\draw (4,2) -- (4,0);

						\fill (0,1) node [left] {$\alpha$};
						\fill (4,1) node [right] {$\alpha$};
						\fill (2,1) node [above] {$\beta$};
						\fill (2,.25) node [above] {$\tau$};
					\end{tikzpicture}
				\end{center}
				\caption{The triangulation $T'$ of $C_{2,1}$.}\label{fig:C21}
			\end{figure}
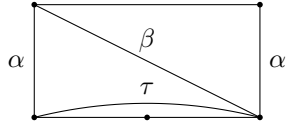
			Consider the ring homomorphism $\pi$ from the ring of Laurent polynomials in the cluster $T'$ of $\mathcal A_{C_{2,1}}$ to the ring of Laurent polynomials in the cluster $T$ of $\mathcal A_{C_{1,1}}$ sending $x_\alpha$ to $x_\alpha$, $x_\beta$ to $x_\beta$ and $x_\tau$ to 1. We denote by $\tilde z_m$, with $m \geq 1$ the loops in $C_{2,1}$. It follows that $\pi(x^{T'}_{\tilde z_m}) = x^T_{z_m}$ for any $m \geq 1$. In particular, it follows from the above discussion that 
			$$x^T_{z_m} = \pi(x^{T'}_{\tilde z_m}) = \pi(F_m(x^{T'}_{\tilde z})) = F_m(\pi(x^{T'}_{\tilde z})) = F_m(x^T_{z})$$
			which finishes the proof.
		\end{proof}

		\begin{exmp}
			We illustrate the combinatorial interpretation of these higher difference properties for $m=1$ in the following example of type $\widetilde A_{1,3}$. Consider the following triangulation $T$ of $C_{1,3}$:
			\begin{figure}[H]
				\begin{center}
					\begin{tikzpicture}[scale = .75]
						\draw (0,0) -- (8,0);
						\fill (8.5,2) node [right] {$\iota$};
						\draw (0,2) -- (8,2);
						\fill (8.5,0) node [right] {$o$};
							
						\foreach \x in {0,6}
						{
							\fill (\x,2) circle (.05);
						}
						\foreach \x in {0,2,...,8}
						{
							\fill (\x,0) circle (.05);
						}
						
						\fill (0,0) node[below] {$o_1$};
						\fill (2,0) node[below] {$o_2$};
						\fill (4,0) node[below] {$o_3$};
						\fill (0,2) node[above] {$\iota_1$};
						\fill (6,0) node[below] {$o_1$};
						\fill (6,2) node[above] {$\iota_1$};

						\foreach \x in {0,2,...,6}
						{
							\draw[red] (0,2) -- (\x,0);
						}
						\draw[red] (6,2) -- (6,0);
						\draw[red] (6,2) -- (8,0);

						\fill[red] (0,1) node[left] {1};
						\fill[red] (1,1) node[below] {2};
						\fill[red] (2,1) node[below] {3};
						\fill[red] (3,1) node[above] {4};
						\fill[red] (6,1) node[left] {1};
						\fill[red] (7,1) node[below] {2};
					\end{tikzpicture}
				\end{center}
			\end{figure}
			The quiver $Q$ of the triangulation $T$ is the following quiver of type $\widetilde A_{3,1}$:
			$$\xymatrix{
				Q : & 1\ar[rd] \ar[rrr] &&& 4 \\
					&& 2 \ar[r] & 3 \ar[ru]
			}$$

			Now, consider the following arcs $M_1$ and $N_1$:
			\begin{figure}[H]
				\begin{center}
					\begin{tikzpicture}[scale = .75]
						\draw (0,0) -- (8,0);
						\fill (8.5,0) node [right] {$o$};
						\draw (0,2) -- (8,2);
						\fill (8.5,2) node [right] {$\iota$};
							
						\foreach \x in {0,6}
						{
							\fill (\x,2) circle (.05);
						}
						\foreach \x in {0,2,...,8}
						{
							\fill (\x,0) circle (.05);
						}
						
						\fill (0,0) node[below] {$o_1$};
						\fill (0,2) node[above] {$\iota_1$};
						\fill (6,0) node[below] {$o_1$};
						\fill (6,2) node[above] {$\iota_1$};

						\foreach \x in {0,2,...,6}
						{
							\draw[red] (0,2) -- (\x,0);
						}
						\draw[red] (6,2) -- (6,0);
						\draw[red] (6,2) -- (8,0);

						\draw[thick] plot[domain=0:8] ({\x},{1.25*(1-(\x-4)^2/(4^2))});
						\draw[thick] plot[domain=2:6] ({\x},{0-(\x-4)^2/(16)+.25});

						\draw (4,1.25) node[above,fill=white] {$M_1$}; 
						\draw (4,.25) node[above,fill=white] {$N_1$}; 
					\end{tikzpicture}
				\end{center}
			\end{figure}
			The corresponding representations of $Q$ (viewed here as objects in the cluster category $\CC_Q$), are 
			$$\xymatrix{
				M_1 : & \k \ar[rd]^0 \ar[rrr]^{1_\k} &&& \k, \\
					&& \k \ar[r]^{1_\k} & \k \ar[ru]^{1_\k} 
			}
			\quad
			\xymatrix{
				N_1 : & 0\ar[rd] \ar[rrr] &&& 0. \\
					&& 0 \ar[r] & \k \ar[ru]
			}
			$$

			We now list the coloured walks corresponding to $M_1$, the monomials that they give rise to in the formula $x_{M_1}$ and the dimension vector $\mathbf e$ giving the same Laurent monomial in the formula $X_{M_1}$. We also make explicit the factorisations in terms of $\mathcal W^T_{N_1}$ and $\mathcal W^T_z$:
			$$\begin{array}{|c|c|c|}
				\hline
				w \in \mathcal W^T_{M_1} & x(w) & \mathbf e \\
				\hline 
				(1^+4^-) o^+ & x_1/x_4 & (0000) \\
				(1^+3^-o^+4^-\iota^+1^-) o^+ & 1/x_3x_4 & (0001) \\
				(1^+2^-o^+3^-\iota^+1^-) o^+ & 1/x_2x_3 & (0011) \\
				(o^+2^-\iota^+1^-) o^+ & 1/x_1x_2 & (0111) \\
				o^+2^-(4^+1^-)2^+ & x_4/x_1 & (1111) \\
				1^+2^-(2^+3^-o^+)1^-2^+ & x_2/x_3 & (1001) \\
				1^+2^-(o^+3^-4^+)1^-2^+ & x_4/x_3 & (1011) \\
				\hline
			\end{array}$$
			Similarly for $N_1$ and $z$, we have 
			$$\begin{array}{|c|c|c|}
				\hline
				w \in \mathcal W^T_{N_1} & x(w) & \mathbf e \\
				\hline 
				2^+3^-o^+ & x_2/x_3 & (0000) \\
				o^+3^-4^+ & x_4/x_3 & (0010) \\
				\hline
			\end{array}$$
			and for $z$, 
			$$\begin{array}{|c|c|c|}
				\hline
				w \in \mathcal W^T_z & x(w) & \mathbf e \\
				\hline 
				1^+4^- & x_1/x_4 & (0000) \\
				1^+3^-o^+4^-\iota^+1^- & 1/x_3x_4 & (0001) \\
				1^+2^-o^+3^-\iota^+1^- & 1/x_2x_3 & (0011) \\
				o^+2^-\iota^+1^- & 1/x_1x_2 & (0111) \\
				4^+1^- & x_4/x_1 & (1111) \\
				\hline
			\end{array}$$
			where the first four lines in the last table correspond to $\mathcal W^{T;o}_z$ and the last one to $\mathcal W^{T;\iota}_z$.
		\end{exmp}

	\subsection{Proof of Theorem \ref{theorem:BQgeometric}}
		We now finish the proof of Theorem \ref{theorem:BQgeometric}. Recall from the definition that
		$$\mathcal B_Q = \mathcal M_Q \sqcup \ens{X_R F_m(X_\delta) \ | \ m \geq 1, R \in \rr \kQ}.$$
		As we already mentioned, we know that 
		$$\mathcal M_Q = \ens{x_\Gamma \ | \ \Gamma \in \T(C_{p,q})}$$
		and that for any $M=\bigoplus_{i \in I} M_i \in \reg \kQ$, we have $X_M = x_\Gamma$ where $\Gamma$ is the family of arcs in $\C^\iota(C_{p,q}) \sqcup \C^o(C_{p,q})$ corresponding to $\ens{M_i}_{i \in I}$. Moreover, it is known that $M$ is rigid if and only if the corresponding curves in $\C_{p,q}$ do not intersect, see for instance \cite{BZ:clustercatsurfaces}. Therefore, to finish the proof of Theorem \ref{theorem:BQgeometric}, it is enough to observe that Lemma \ref{lem:FmXdelta} implies that $F_m(X_\delta) = x_{z_m}$ for any $m \geq 1$. \hfill \qed

\section{Proof of Theorem \ref{theorem:BQatomic}}\label{section:BQatomic}
	In this section $Q$ still denotes a quiver of type $\widetilde A_{p,q}$ with $p,q \geq 1$.

	Proving Theorem \ref{theorem:BQatomic} is equivalent to proving the following three points:
	\begin{enumerate}
		\item[\textbf{(B1)}] $\mathcal B_Q$ is a $\Z$-linear basis of $\mathcal A_Q$;
		\item[\textbf{(B2)}] $\mathcal B_Q$ is contained in the positive cone of $\mathcal A_Q$;
		\item[\textbf{(B3)}] Every element in the positive cone of $\mathcal A_Q$ can be written as a $\Z_{\geq 0}$-linear combination of elements of $\mathcal B_Q$.
 	\end{enumerate}

	\subsection{Proof of \textbf{(B1)}}
		It follows from \cite[Theorem 4.21]{Dupont:BaseAaffine} (see also \cite{GLS:generic}) that the set
		$$\mathcal G_Q = \mathcal M_Q \sqcup \ens{X_R X_\delta^m \ | \ m \geq 1, R \in \rr\kQ}$$
		is a $\Z$-linear basis of $\mathcal A_Q$. Since for any $m \geq 1$, $F_m$ is a monic polynomial of degree $m$, it easily follows that $\mathcal B_Q$ is a $\Z$-linear basis of $\mathcal A_Q$, see \cite[\S 6]{Dupont:genericvariables} for a precise description of the base change. This proves \textbf{(B1)}. \hfill \qed

	\subsection{Proof of \textbf{(B2)}}
		We need to prove that for any cluster $T$ in $\mathcal A_Q$, the Laurent expansion of any element of $\mathcal B_Q$ in the cluster $T$ has positive coefficients. An element $b \in \mathcal B_Q$ is a certain $x_\Gamma$ where $\Gamma$ is a collection of elements in $\hat \A(C_{p,q})$ and its $T$-expansion is given explicitly by the formula $x^T_\Gamma$. Now it follows from the definition of the map $x^T_?$ that $x^T_\gamma$ is a subtraction-free Laurent polynomial in the cluster $T$ for any $\gamma \in \hat \A(C_{p,q})$. Since subtraction-free Laurent polynomials in $T$ form a semiring, it follows that $x^T_\Gamma$ is also a subtraction-free Laurent polynomial in $T$. This proves \textbf{(B2)}. \hfill \qed

	\subsection{Proof of \textbf{(B3)}}
		This is the long part of the proof and it will be divided in several intermediate results. 

		\subsubsection{Beginning the proof}
			Let $y$ be a positive element in the cluster algebra $\mathcal A_Q$. According to \textbf{(B1)}, we can write 
			$$y = \sum_{\Gamma \in \hat \T(C_{p,q})} \lambda_{\Gamma}(y) x_\Gamma$$
			with $\lambda_{\Gamma}(y) \in \Z$. We need to prove that $\lambda_{\Gamma}(y) \geq 0$ for any $\Gamma \in \hat \T(C_{p,q})$. 

			When $\Gamma\in \T(C_{p,q})$ the situation is slightly easier, so we explain that situation first. We find some cluster $T$ which is compatible with $\Gamma$, and such that $x_\Gamma$ does not appear in the $T$-expansion of any other element $x_\Sigma$ of $\mathcal B_Q$. Then, $\lambda_\Gamma(y)$ coincides with the coefficient of $x_\Gamma$ in the $T$-expansion of $y$, which is non-negative by assumption. One point to bear in mind about this strategy is that the same $T$ must work for all choices of $\Sigma$. 

			When $\Gamma\in \hat \T(C_{p,q})\setminus \T(C_{p,q})$, the situation is more complicated for two reasons. First of all, there are infinitely many clusters compatible with $\Gamma$, and it is not enough to pick one. Rather, we define an infinite family of clusters $T_r$ for $r \in \mathbb Z$, any of which is compatible with $\Gamma_r$.

			Secondly, the expansion of $x_\Gamma$ in any cluster will include multiple terms. We will therefore pick one, and call it $t_{\Gamma,r}$. For any $r$, $t_{\Gamma,r}$ will appear in $T_r$-expansion of $x_\Gamma$ with coefficient one.

			We will then show that, for any $\Sigma\in \hat T(C_{p,q})$, if $r$ is sufficiently large, then $t_{\Gamma,r}$ does not appear in the $T_r$ expansion of $x_\Sigma$. Since, in our sum for $y$, only finitely many terms appear, it follows that we can choose $r$ large enough for all possible $\Sigma$ 
			simultaneously. Then the coefficient of $x_\Gamma$ in the cluster expansion of $y$ agrees with the coefficient of $t_{\Gamma,r}$ in the $T_r$-expansion of $y$, which is positive, so we are done. 

			The remainder of the proof fills in the details of the discussion above. First we treat the case that $\Gamma\in \T(C_{p,q})$, and then we treat the case $\Gamma \in \hat \T(C_{p,q})\setminus \T(C_{p,q})$. Within each of these cases, there are two subcases, depending on whether $\Sigma$ belongs to $\hat \T(C_{p,q}) \setminus \T(C_{p,q})$.

		\subsubsection{Technical lemmas}
			Before we can complete the proof, we need to collect some technical lemmas.

			\begin{lem}\label{lemma:techone}
				Fix $T$ a triangulation of $C_{p,q}$, and let $\gamma$ be a peripheral arc in $C_{p,q}$, with $\gamma \not\in T$. Then any term in the $T$-expansion of $x_\gamma$ has negative degree with respect to the cluster variables corresponding to arcs of $T$ which cross $\gamma$. 
			\end{lem}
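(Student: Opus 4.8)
The plan is to reproduce the argument of Lemma~\ref{lemma:Atechone}, transporting it to $C_{p,q}$ via the universal cover, which is precisely the setting in which the combinatorial formula \eqref{stform} for $x_\gamma$ was established. Since $\gamma$ is an honest arc and not a loop, its $T$-expansion is $x^T_\gamma = \sum_{w \in \mathcal W^T_\gamma} p(w)$, where each coloured $\gamma$-walk $w$ is the image of a coloured $\widetilde\gamma$-walk $\widetilde w$ on a lift $\widetilde T$, for a fixed lift $\widetilde\gamma$ of $\gamma$. I would first record that, because $\gamma \notin T$ and $\gamma$ is not isotopic to a boundary segment, the lift $\widetilde\gamma$ is distinct from every edge of $\widetilde T$; consequently no walk has length $1$, so each $w$ has length $2m+1$ with $m \geq 1$.

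Next I would fix such a walk $w$ and bound the degree of $p(w)=\frac{x_{\alpha_1}x_{\alpha_3}\cdots x_{\alpha_{2m+1}}}{x_{\alpha_2}\cdots x_{\alpha_{2m}}}$ with respect to the arcs of $T$ crossing $\gamma$. The denominator has exactly $m$ factors; by property~(4)---equivalently, because each even-position edge $\widetilde\alpha_{2i}$ of $\widetilde w$ crosses $\widetilde\gamma$ and crossings project to crossings---every denominator factor crosses $\gamma$, so the denominator contributes $-m$. The numerator has $m+1$ factors, but the first and last edges $\alpha_1$ and $\alpha_{2m+1}$ are incident to the two endpoints of $\gamma$ (their lifts being incident to the endpoints of $\widetilde\gamma$), hence cannot cross $\gamma$ in their interior; thus at most $m-1$ numerator factors cross $\gamma$ and the numerator contributes at most $m-1$. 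The total degree is therefore at most $(m-1)-m=-1<0$, which is the desired conclusion for every term $p(w)$.

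The role of peripherality---and the one point I would verify with care---is that the lift $\widetilde\gamma$ cuts out a finite disc-shaped region $P$, namely the union of the triangles of $\widetilde T$ meeting $\widetilde\gamma$, inside which the entire computation reduces verbatim to the type~$A$ configuration of Lemma~\ref{lemma:Atechone}; this is what legitimises speaking of ``edges crossing $\gamma$'' through the upstairs combinatorics. The only subtlety I anticipate is the descent from $\widetilde T$ to $T$, where distinct lifts of a single arc of $T$ become identified: one must check that the inequality, transparent upstairs, is not spoiled downstairs. Since the bound is obtained position by position along the walk---exactly $m$ denominator crossings against at most $m-1$ numerator crossings, counted with multiplicity---no cancellation between a numerator and a denominator occurrence of the same arc can improve or destroy it, so the estimate survives projection and the matter is routine rather than a genuine obstacle.
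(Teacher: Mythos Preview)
Your proof is correct and takes essentially the same approach as the paper's own one-line proof, which simply invokes Lemma~\ref{lemma:Atechone}: the first and last edges of any coloured $\gamma$-walk, being incident to the endpoints of $\gamma$, do not cross it, so the degree is at most $(m-1)-m=-1$. Your final paragraph slightly misidentifies the descent subtlety --- the genuine question is not cancellation between numerator and denominator occurrences, but whether the projected first or last edge might cross $\gamma$ via a \emph{different} lift (on the annulus, sharing an endpoint with $\gamma$ does not by itself preclude crossing it); this is where peripherality enters, and the paper's proof glosses over the same point.
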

			\begin{proof} 
				The proof is essentially the same as that of Lemma \ref{lemma:Atechone}: for any coloured $\gamma$-walk $w$ on $T$, the first and last segments of $w$ run along arcs which do not cross $\gamma$, and therefore do not contribute to the degree of the corresponding term $p(w)$.  
			\end{proof}

			We now prove an analogue of Lemma \ref{lemma:Atech}.
			\begin{lem} \label{lemma:periph}
				Fix a triangulation $T$ of $C_{p,q}$. Let $\gamma$ be a peripheral arc in $C_{p,q}$. Suppose that $\beta$ is an arc compatible with $\gamma$. Then each term in the $T$-expansion of $\beta$ has non-positive degree with respect to the set of edges which cross $\gamma$. 
			\end{lem}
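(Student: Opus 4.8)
The plan is to mimic the proof of Lemma \ref{lemma:Atech} as closely as possible. That argument is essentially local to a neighbourhood of $\gamma$, and the only feature of the disc it exploits is that the region swept out by $\gamma$ is itself a disc that $\gamma$ separates into two pieces. The whole point of restricting to a \emph{peripheral} arc is precisely to recover this feature on the annulus. So the work divides into a geometric input (the relevant region is an embedded disc) and a transcription of the combinatorial counting.

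First I would dispose of the trivial cases: if $\beta \in T$ then $\beta$ crosses no arc of $T$, so its degree is zero; and if $\beta = \gamma$ we are done by Lemma \ref{lemma:techone}. So assume $\beta \neq \gamma$ and $\beta \notin T$, and fix a coloured $\beta$-walk $w$ on $T$, of length $2m+1$ with edges $w_1, \dots, w_{2m+1}$. Recall that the even-position edges of $w$ are exactly the edges of $T$ crossed by $\beta$, listed in the order in which $\beta$ crosses them; this is built into the combinatorial formula via the universal cover, following \cite{ST:unpunctured}. I would then introduce the region $P$, the union of the closed triangles of $T$ through which $\gamma$ passes. Here is the key geometric observation, and the only place peripherality is used: since $\gamma$ is a simple arc with both endpoints on a single boundary component, it does not wind around the annulus, hence cuts off an embedded disc; consequently the triangles it meets are pairwise distinct and $P$ is an embedded disc in which $\gamma$ runs from one boundary arc to another, separating $P$ into two sub-discs. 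Moreover the interior edges of $P$ (those shared by two of its triangles) are exactly the edges of $T$ crossed by $\gamma$. A bridging $\gamma$ would fail this, since its tubular neighbourhood could close up around the annulus and $\gamma$ would no longer separate $P$; this is exactly why the statement is restricted to peripheral arcs.

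With $P$ in hand, the combinatorial argument of Lemma \ref{lemma:Atech} transfers essentially verbatim, treating each connected excursion of $\beta$ into $P$ separately. For such an excursion, let $w_{2i}, w_{2i+2}, \dots, w_{2j}$ be the corresponding maximal run of even-position edges lying in the interior of $P$; these are precisely the $\gamma$-crossing edges met during that excursion. The flanking edges $w_{2i-2}$ and $w_{2j+2}$ cross $\beta$ but not $\gamma$, so they lie outside $P$. A short count shows that the net degree contributed by this run, with respect to the $\gamma$-crossing grading, is strictly positive only if every intervening odd-position edge $w_{2i-1}, \dots, w_{2j+1}$ also crosses $\gamma$; but then this stretch of $w$ would cross $\gamma$ an odd number of times. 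On the other hand, the two ends of the excursion lie on $\partial P$ and, because the excursion avoids $\gamma$, they lie in the same component of $P \setminus \gamma$; since $P$ is a disc and $\gamma$ separates it, any path between them crosses $\gamma$ an even number of times, forcing the stretch of $w$ to cross $\gamma$ evenly, a contradiction. Hence each excursion contributes non-positively, and summing over all of them gives that $p(w)$ has non-positive degree. The subcase in which $\beta$ and $\gamma$ share an endpoint is handled exactly as in Lemma \ref{lemma:Atech}: the initial edge $w_1$ is incident to that endpoint and so cannot cross $\gamma$. Since $w$ was arbitrary, the lemma follows.

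The main obstacle is entirely the geometric claim that $P$ is an embedded disc separated by $\gamma$; once this is in place, the rest is a transcription of the disc argument together with the elementary parity observation. I would take care to justify this claim through the universal cover, where $\gamma$ lifts to a compact arc bounding a half-disc, since that is also what legitimises reading off the even-position edges of $w$ in the order in which $\beta$ meets them.
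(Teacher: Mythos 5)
Your argument reproduces the paper's treatment of the generic case, but it hinges on a geometric claim that is false in an important special case, and that special case is exactly where the real content of this lemma (beyond Lemma \ref{lemma:Atech}) lies. You assert that because $\gamma$ is peripheral, the triangles it meets are pairwise distinct and their union $P$ is an embedded disc which $\gamma$ separates into two sub-discs. This fails when $\gamma$ is what the paper calls a \emph{near-loop}: a peripheral arc homotopic to an entire boundary circle, whose first and last segments lie in the \emph{same} triangle of $T$ (such arcs exist whenever the relevant boundary component carries at least two marked points). For a near-loop, $P$ wraps all the way around the annulus, consecutive lifts of $\gamma$ to the universal cover sweep out overlapping polygons, and $\gamma$ does not separate $P$. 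Peripherality rules out winding more than once, but it does not rule out winding exactly once.

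Concretely, the step that breaks is your parity count. For a near-loop $\gamma$ attached to, say, the inside boundary, there exist edges of $T$ running from the inside component back to the inside component which cross $\gamma$ \emph{twice}; this cannot happen in a disc. A single odd-position step of the walk along such an edge contributes two crossings of $\gamma$ while contributing only $+1$ to the degree, so ``all odd-position edges in the stretch cross $\gamma$'' no longer implies that the stretch crosses $\gamma$ an odd number of times, and the contradiction you derive evaporates. The paper deals with this by a separate case analysis on $\beta$ (peripheral on the same side, peripheral on the opposite side, or bridging): in the opposite-side case one must show the doubly-crossing edges cannot actually occur in the relevant stretch of the walk (by tracking which boundary component each step starts from), and in the bridging case one must observe that compatibility with a near-loop forces $\beta$ to share an endpoint with $\gamma$, reducing to the shared-endpoint argument. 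None of this is covered by your proposal, so as written it proves the lemma only for peripheral arcs that are not near-loops.
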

			\begin{proof}
				It is possible that the beginning and ending segments of $\gamma$ lie in the same triangle of $T$: this only happens if $\gamma$ is homotopic to the entire inner boundary or the entire outer boundary. In this case, we call $\gamma$ a \emph{near-loop}. 

				Suppose first that $\gamma$ is not a near-loop. For each lifting of $\gamma$ to an arc $\widetilde \gamma$ of the universal cover of $C_{p,q}$, define the polygon $P_{\widetilde \gamma}$ as in the proof of Lemma \ref{lemma:Atech}. These polygons do not overlap (except possibly at a vertex). 

				Let $w$ be a coloured $\beta$-walk on $T$. The argument from Lemma \ref{lemma:Atech} goes through --- in order for the degree of the term corresponding to $w$ to be positive with respect to the edges crossed by $\gamma$, there would have to be some $\widetilde \gamma$ such that $w$ crosses $\widetilde \gamma$ an odd number of times, which is impossible. 

				Suppose next that $\gamma$ is a near-loop. For convenience, fix that $\gamma$ is attached to the inside boundary component. There are three possibilities
				for $\beta$: it might be peripheral on the inside, peripheral on the outside, or bridging. 

				Suppose first that $\beta$ is peripheral on the inside. The proof of Lemma \ref{lemma:Atech} goes through without any changes. 

				Suppose next that $\beta$ is peripheral on the outside. Let $w$ be a $T$-walk for $\beta$. Define $P$ to consist of the union of those triangles
				through which $\gamma$ passes. Note that this includes all triangles of $T$ which have vertices lying on both boundary components. As in Lemma \ref{lemma:Atech}, the even-numbered edges of $w$ which cross $\beta$ form a consecutive string, say $w_{2i},w_{2i+2},\dots, w_{2j}$. In order for the degree of the corresponding term $p(w)$ to be positive, it must be the case that all the odd-numbered edges from $w_{2i-1}$ and $w_{2j+1}$ also cross $\gamma$, resulting in a subsequence $\overline w=w_{2i-1},\dots,w_{2j+1} $ of $w$ where all the steps of $\overline w$ cross $\gamma$, while the step before and the one after do not. Note also that $\overline w$ begins on the outside component (since $w_{2i-2}$ crosses $\beta$ but not $\gamma$). The one extra observation to make in this case is that there are edges of $T$ which run from the inside component to the inside component, and cross $\gamma$ twice. (This type of phenomenon does not arise in a disc.) Such an edge would foil the argument of Lemma \ref{lemma:Atech}, which depends on the parity of the total number of crossings. However, such an edge cannot appear in even position in $\overline w$, because it does not cross $\beta$. We also claim that it cannot appear in odd position in $\overline w$. Suppose it appears in position $2k+1$. Up until that point, we assume that each edge $w_{2i-1},\dots,w_{2k-1}$ crosses $\gamma$ once --- but this means that $w_{2k+1}$ begins on the outside edge, so it cannot be one of these pathological edges. 

				Suppose next that $\beta$ is bridging. Since $\gamma$ is a near-loop, and $\beta$ is compatible with $\gamma$, $\beta$ shares an endpoint with $\gamma$. The argument from the proof of Lemma \ref{lemma:Atech} for the case that $\beta$ and $\gamma$ share an endpoint goes through without changes. 
			\end{proof}

			We prove a similar lemma, involving a peripheral arc $\gamma$ and a loop. 
			\begin{lem} \label{lemma:periph-loop}
				Fix a triangulation $T$ of $C_{p,q}$. Let $\gamma$ be an arc in $\A(C_{p,q})$ which is peripheral. For any positive integer $m$, each term in the $T$-expansion of $x_{z_m}$ has non-positive degree with respect to the set of edges which cross $\gamma$. 
			\end{lem}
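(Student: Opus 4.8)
The plan is to adapt the excursion/parity analysis of Lemma \ref{lemma:periph} to the coloured $m$-walks computing $x_{z_m}$, after lifting everything to the $m$-fold cover. First I would pass to $C_{mp,mq}$ with its induced triangulation $\widetilde T$, where $z_m$ lifts to the core meridian $\tilde z$, a \emph{simple} closed curve, and where each coloured $m$-walk $w$ becomes a closed walk homotopic to $\tilde z$ whose edges strictly alternate between $+$ and $-$ decorations. The peripheral arc $\gamma$ lifts to a family of pairwise disjoint peripheral arcs $\widetilde\gamma$, and a downstairs edge of $T$ crosses $\gamma$ precisely when its lift crosses one of the $\widetilde\gamma$. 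Assuming for the moment that $\gamma$ is not a near-loop, the discs cut off by the various $\widetilde\gamma$ do not overlap and each edge meets each $\widetilde\gamma$ at most once, so the degree of $x(w)$ with respect to edges crossing $\gamma$ is exactly the sum, over the lifts $\widetilde\gamma$, of the signed count of edges of $w$ crossing $\widetilde\gamma$ (a $+$ edge counting $+1$, a $-$ edge counting $-1$). It therefore suffices to bound each of these local contributions by $0$.

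Fix one lift $\widetilde\gamma$ and let $D$ be the disc it cuts off from its boundary component, the analogue of the polygon $P$ of Lemma \ref{lemma:Atech}. The crucial geometric input is that $\tilde z$, being the core, is disjoint from the peripheral arc $\widetilde\gamma$; since the mod-$2$ crossing number of a closed curve with a properly embedded arc is a free-homotopy invariant, the closed walk $w\simeq\tilde z$ meets $\widetilde\gamma$ an even number of times, and these crossings cut $w$ into \emph{excursions} lying alternately inside and outside $D$, each bounded by two crossing edges. The key combinatorial observation is that every \emph{interior} edge of an excursion (an edge lying wholly inside $D$) joins two marked points of one boundary component, hence is non-bridging and, by \textbf{(P2)}, must carry a $+$ decoration. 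Because the decorations alternate, two consecutive $+$ edges are forbidden, so each excursion contains at most one interior edge. I would then split into two cases: if the excursion has no interior edge, its two bounding crossing edges are adjacent on $w$ and so carry opposite signs, contributing $0$; if it has exactly one interior edge, that edge is a $+$, forcing both neighbouring crossing edges to be $-$, and the excursion contributes $-2$. Summing over excursions gives a non-positive signed crossing count for each $\widetilde\gamma$, and then summing over the lifts yields the claim in this case.

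The main obstacle is the near-loop situation, exactly the phenomenon flagged in the near-loop part of Lemma \ref{lemma:periph}: when a lift $\widetilde\gamma$ runs around almost the whole boundary, a single inside-to-inside edge may cross it twice, which both disturbs the clean excursion partition and breaks the identification of the degree with a simple sum over lifts. Here I would use that such doubly-crossing edges are necessarily peripheral, hence carry $+$ decorations, whereas a bridging ($-$) edge, being taut from one boundary component to the other, still meets the union of all lifts at most once. Consequently these extra crossings are over-counted only with positive sign, so the true degree with respect to edges crossing $\gamma$ is bounded above by the per-lift sum, and the inequality is preserved. The remaining verification that each per-lift contribution stays non-positive in the presence of a doubly-crossed $\widetilde\gamma$ I would carry out by the same careful position-by-position case analysis used for near-loops in the proof of Lemma \ref{lemma:periph}, tracking where the pathological edge can occur in an excursion; I expect this bookkeeping, rather than the excursion argument itself, to be the delicate part of the proof.
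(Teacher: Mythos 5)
Your argument is correct and rests on the same geometric fact as the paper's proof --- an edge lying entirely in the disc $D$ cut off by (a lift of) $\gamma$ joins marked points of a single boundary component, hence is non-bridging and therefore $+$-decorated by \textbf{(P2)} --- but you organize the count differently. The paper works with maximal runs of consecutive even-numbered (bridging, $-$-decorated) edges crossing $\gamma$ and derives a contradiction with maximality from the observation that a walk-endpoint inside $D$ forces the adjacent bridging edge to cross $\gamma$; you instead decompose the closed walk into excursions into $D$ and show that each excursion contributes $0$ or $-2$ to the signed crossing count, because sign-alternation permits at most one interior ($+$) edge per excursion. Your version is, if anything, more transparent, and your explicit passage to the $m$-fold cover with a per-lift count makes precise a point the paper's short proof leaves implicit. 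Two small adjustments. First, the asserted equality ``degree $=$ sum over lifts of signed crossing counts'' can fail even when $\gamma$ is not a near-loop: a peripheral edge of $T$ can cross a non-near-loop peripheral $\gamma$ twice (for instance, in a rank-four tube, the arcs corresponding to $R_1^{(2)}$ and $R_3^{(2)}$ meet twice). However, the repair you describe for the near-loop case --- doubly-crossing edges are peripheral, hence $+$, hence only over-counted with positive sign, while bridging ($-$) edges meet $\gamma$ at most once --- applies verbatim in general, so the inequality ``degree $\leq$ per-lift sum'' holds throughout and your conclusion stands. Second, the extra near-loop bookkeeping you anticipate is not actually needed: in the cover, no single lift $\widetilde\gamma$ is crossed twice by one edge in minimal position, since a sub-arc inside $D$ with both endpoints on $\widetilde\gamma$ cuts off a disc containing no marked points and is removable by isotopy; the only genuine phenomenon is one edge meeting two distinct lifts once each, which your per-lift excursion count already handles.
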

			\begin{proof}
				Consider a term in the $T$-expansion of $x_{z_m}$, and let $w$ be the corresponding walk. If all the even-numbered edges of $w$ cross $\gamma$, 
				then the result is true, so suppose otherwise. 

				Consider a maximal-length subsequences of even-numbered edges of $w$ which cross $\gamma$, say $w_{2i},\dots,w_{2j}$. It will suffice to show that it is impossible to have both $w_{2i-1}$ and $w_{2j+1}$ crossing $\gamma$. Suppose both of them do cross $\gamma$. It would follow the endpoints of the walk $w_{2i-1}\dots w_{2j+1}$ are on opposite sides of $\gamma$. Therefore, one of the endpoints is in the region cut out by $\gamma$ and the boundary component to which $\gamma$ is connected. But then the next even-numbered edge, which crosses from one boundary component to the other, necessarily also crosses $\gamma$, contrary to our assumption. 
			\end{proof}

			Before we go on, we also recall a representation-theoretic lemma from \cite{Cerulli:finitetype}, and for this, we need to recall some additional notions. A fuller account of them can be found in \cite{Cerulli:finitetype} and in \cite{DWZ:potentials, DWZ:potentials2} from which it draws. 

			Let $T$ be a triangulation of $\C_{p,q}$, whose arcs are numbered 1 to $n$. Associated to it is a quiver $Q$, whose vertices are associated to the arcs of $T$, 
			a potential $S$, and a collection of (complex) {\it decorated quiver representations} $M_\alpha$, where $\alpha$ ranges over the isotopy classes of curves in $\C_{p,q}$. For us, the decorations will not play a role, so we will not distinguish the decorated quiver representation from the representation itself. 

			The cluster variable associated to $\alpha$ can be recovered in the following way. First, for any $\e$ in $\mathbb Z^n$, thought of as a dimension vector, we can consider the $\Gr_{\e}(M)$, the Grassmannian of subrepresentations of $M$ whose dimension at vertex $i$ is $\e_i$. Then the $T$-expansion of $x_\alpha$
			can be expressed as follows:
			$$x_\alpha= \sum_{\e} \chi(\Gr_{\e}(M_\alpha)) \x^{\g_\alpha+B\e}$$ 

			Here $\chi$ denotes the Euler-Poincar\'e characteristic, $B$ is the $B$-matrix corresponding to $T$, and $\g_\alpha$ is the \emph{$\g$-vector} associated to $\alpha$. For $\v\in \mathbb Z^n$, we write $\x^\v$ for $x_1^{\v_1}\dots x_n^{\v_n}$, where $x_1,\dots,x_n$ are the cluster variables associated to the arcs of $T$. Inversely, given a monomial $\x^\v$, we let $\exp(\x^\v)=\v$. 

			A formula of a similar form can be given for any cluster monomial. Let $\Sigma\in \T(C_{p,q})$. Define $M_\Sigma$ to be the sum of the corresponding $M_\alpha$'s, with multiplicity, and define $\g_\Sigma$ to be the sum of the corresponding $\g_\alpha$'s. Then
			$$x_\Sigma= \sum_{\e} \chi(\Gr_{\e}(M_\Sigma)) \prod_{i=1}^n \x^{\g_\Sigma+B\e}$$
			We write $x_{\Sigma}(\e)$ for the term in the above sum corresponding to $\e$.

			We endow $\Z^n$ with the standard inner product given by 
			$$\e.\f = \sum_{i=1}^n e_if_i$$
			for any $\e = (e_i)_{1 \leq i \leq n}$ and $\f = (f_i)_{1 \leq i \leq n}$ in $\Z^n$.

			The following lemma was proved in \cite[Lemma 5.1]{Cerulli:finitetype} for finite type cluster algebras. Its proof remains valid in the context of cluster algebras of type $\widetilde A$: condition (12) of \cite{Cerulli:finitetype} is satisfied by \cite[Theorem 36]{Labardini:potentialssurfaces} or (in a more special case sufficient for our purposes) \cite[Lemma 2.4]{ABCP}.

			\begin{lem}[\cite{Cerulli:finitetype}]\label{lem:cerulli}
				Let $T$ be a triangulation of $C_{p,q}$, and let $\Sigma \in \T(C_{p,q})$, such that none of the arcs in $\Sigma$ is in $T$. Then for any $\mathbf e \neq 0$ such that $\Gr_{\e}(M_\Sigma) \neq \emptyset$, we have:
				$$\e.\exp(x_{\Sigma}(\e)) < 0.$$ \hfill \qed
			\end{lem}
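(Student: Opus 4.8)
The statement is representation-theoretic, so the plan is to work with the decorated representations $M_\alpha$ of the Jacobian algebra $\Lambda = \Lambda_T$ attached to the triangulation $T$, and to exploit the Caldero--Chapoton-type expansion recalled above, in which the term indexed by $\e$ has exponent $\exp(x_\Sigma(\e)) = \g_\Sigma + B\e$. The first step is a purely formal simplification. Since $T$ is a triangulation, the quiver $Q_T$ has no $2$-cycles and the exchange matrix $B$ is skew-symmetric, so the quadratic term vanishes, $\e \cdot (B\e) = \e^{T} B \e = 0$, and hence
$$\e \cdot \exp(x_\Sigma(\e)) = \e \cdot \g_\Sigma.$$
Thus the problem reduces to showing $\e \cdot \g_\Sigma < 0$ for every nonzero $\e$ with $\Gr_\e(M_\Sigma) \neq \emptyset$, that is, for every $\e$ realised as the dimension vector of a genuine subrepresentation $N \subseteq M_\Sigma$.

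The heart of the argument is to give $\e \cdot \g_\Sigma$ a homological meaning. This is exactly the role of condition (12): it guarantees that the surface QP $(Q_T, S)$ behaves homologically like the hereditary model, which yields an identity of the form
$$\e \cdot \g_\Sigma = -\langle \e, \underline{\dim}\, M_\Sigma\rangle,$$
where $\langle \cdot, \cdot\rangle$ is the bilinear form on dimension vectors computing $\dim \Hom_\Lambda - \dim \Ext^1_\Lambda$. In the hereditary case this is the ordinary Euler form and the identity is a direct unwinding of the $\g$-vector formula; in the genuine, non-hereditary surface situation it is precisely what condition (12) encodes, and it holds in type $\widetilde A$ because the QP associated to a triangulation of the annulus is non-degenerate and Jacobi-finite by \cite[Theorem 36]{Labardini:potentialssurfaces} (or, in the special cases we need, by \cite[Lemma 2.4]{ABCP}). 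With this identity in hand the claim becomes $\langle \e, \underline{\dim}\, M_\Sigma\rangle > 0$.

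Finally I would extract the strict positivity from rigidity. Since $\Sigma \in \T(C_{p,q})$ consists of pairwise compatible arcs, the associated object is rigid, so $\Ext^1_\Lambda(M_\Sigma, M_\Sigma) = 0$; and since none of the arcs of $\Sigma$ lies in $T$, the representation $M_\Sigma$ is a nonzero $\Lambda$-module. The inclusion $N \hookrightarrow M_\Sigma$ is a nonzero homomorphism, so $\dim \Hom_\Lambda(N, M_\Sigma) \geq 1$. Applying $\Hom_\Lambda(-, M_\Sigma)$ to $0 \to N \to M_\Sigma \to M_\Sigma/N \to 0$ gives the exact sequence
$$\cdots \to \Ext^1_\Lambda(M_\Sigma, M_\Sigma) \to \Ext^1_\Lambda(N, M_\Sigma) \to \Ext^2_\Lambda(M_\Sigma/N, M_\Sigma) \to \cdots$$
so that $\Ext^1_\Lambda(N, M_\Sigma) = 0$, whence $\langle \e, \underline{\dim}\, M_\Sigma\rangle = \dim \Hom_\Lambda(N, M_\Sigma) > 0$ and therefore $\e \cdot \exp(x_\Sigma(\e)) < 0$, as desired. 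The main obstacle is the middle step: the Jacobian algebra of a surface QP is not hereditary, so neither the Euler-form interpretation of $\e \cdot \g_\Sigma$ nor the vanishing of $\Ext^1_\Lambda(N, M_\Sigma)$ is automatic (one cannot invoke $\Ext^2_\Lambda = 0$ to kill the rightmost term above). Controlling these homological quantities is exactly where condition (12), and hence the non-degeneracy results of Labardini-Fragoso and of \cite{ABCP}, become indispensable; once they are available, the argument collapses to the clean rigidity computation just sketched.
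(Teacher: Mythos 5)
First, a point of comparison: the paper does not actually prove this lemma. It is imported wholesale from Cerulli Irelli's Lemma 5.1, and the only new content supplied here is the observation that the single hypothesis of that proof which is not automatic outside finite type --- his condition (12) --- holds for annulus triangulations by Theorem 36 of Labardini-Fragoso or Lemma 2.4 of ABCP. Your opening reduction is correct and is the right first move: since $B$ is skew-symmetric, $\e\cdot B\e=0$, so the claim is equivalent to $\e\cdot\g_\Sigma<0$ for every nonzero dimension vector $\e$ of a subrepresentation of $M_\Sigma$.

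Everything after that reduction, however, is a gap. The two assertions that carry your argument --- the identity $\e\cdot\g_\Sigma=-\langle\e,\dim M_\Sigma\rangle$ for ``the bilinear form on dimension vectors computing $\dim\Hom_\Lambda-\dim\Ext^1_\Lambda$,'' and the vanishing $\Ext^1_\Lambda(N,M_\Sigma)=0$ --- are not proved; both are attributed to a condition (12) whose content you are guessing. Neither is available in this setting. The Jacobian algebra of an annulus triangulation is a gentle algebra with relations, in general of infinite global dimension, so $\dim\Hom_\Lambda-\dim\Ext^1_\Lambda$ is not determined by dimension vectors at all: the bilinear form in your middle step is not well defined, and no condition on the potential can make it so. Likewise, as you yourself note, the long exact sequence only embeds $\Ext^1_\Lambda(N,M_\Sigma)$ into $\Ext^2_\Lambda(M_\Sigma/N,M_\Sigma)$, which need not vanish, and you offer no substitute. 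The actual mechanism behind Cerulli's lemma is different: the homological meaning of $\e\cdot\g_\Sigma$ comes from the Derksen--Weyman--Zelevinsky formula $E^{inj}(\mathcal M,\mathcal N)=\dim\Hom_\Lambda(M,N)+\dim (N)\cdot\g_{\mathcal M}$ (note the direction of the Hom) together with properties of the $E$-invariant for nondegenerate, Jacobi-finite quivers with potential --- which is precisely where the cited results of Labardini-Fragoso and ABCP enter --- not from a hereditary-style Euler form or from $\Ext^1_\Lambda(N,M_\Sigma)=0$. As written, the proposal correctly reduces the lemma to $\e\cdot\g_\Sigma<0$ and then defers its entire remaining content to unverified identities, so it does not constitute a proof.
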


			We also need the following proposition from \cite{DWZ:potentials2}. Let $\Sigma\in \T(C_{p,q})$ such that it shares no arcs with $T$. Then define $E(M_\Sigma)=\dim(M_\Sigma).\g_\Sigma + \dim \Hom(M_\Sigma,M_\Sigma)$. 

			\begin{prop}[{\cite[Corollary 7.2]{DWZ:potentials2}}]\label{Ezero} 
				For any $\Sigma\in \T(C_{p,q})$ containing no arcs from $T$, we have that $E(M_\Sigma)=0$.
			\end{prop}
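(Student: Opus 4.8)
The plan is to deduce the vanishing of $E(M_\Sigma)$ from the general theory of decorated representations of quivers with potentials developed in \cite{DWZ:potentials, DWZ:potentials2}, the only genuinely delicate point being that this theory is applicable in our tame setting. The conceptual content is twofold: $E$ is a non-negative invariant which vanishes exactly on rigid decorated representations, and $M_\Sigma$ is rigid precisely because $\Sigma$ is a cluster.

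First I would recall the homological meaning of the quantity $E(\mathcal M)=\dim(M)\cdot \g_{\mathcal M}+\dim\Hom(M,M)$ attached to a decorated representation $\mathcal M=(M,V)$ of the Jacobian algebra of $(Q,S)$. In \cite{DWZ:potentials2} this invariant is shown to compute the dimension of the space of self-extensions of $\mathcal M$; in particular it is a non-negative integer which is zero if and only if $\mathcal M$ is \emph{rigid}. This reduces the proposition to the assertion that $M_\Sigma$ is a rigid representation, independently of the auxiliary triangulation $T$ used to define the $\mathbf g$-vectors and the Grassmannian formula.

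Second, I would establish rigidity of $M_\Sigma$. Since $\Sigma\in\T(C_{p,q})$ is a cluster, its arcs are pairwise compatible, hence pairwise non-crossing. Under the dictionary of \cite{Labardini:potentialssurfaces} between curves in $C_{p,q}$ and representations of the Jacobian algebra, each $M_\alpha$ is rigid and non-crossing of compatible arcs $\alpha,\beta$ forces $\Ext^1(M_\alpha,M_\beta)=0$; equivalently, $\Sigma$ corresponds to a rigid object of $\CC_Q$, the cluster monomial $x_\Sigma$. Summing over $\Sigma$ with multiplicities then gives $\Ext^1(M_\Sigma,M_\Sigma)=0$, so $M_\Sigma$ is rigid and the preceding paragraph yields $E(M_\Sigma)=0$.

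The main obstacle is not the surface combinatorics but the verification that the machinery of \cite{DWZ:potentials2} is available at all in type $\widetilde A$: its arguments require the potential $S$ to be non-degenerate with finite-dimensional Jacobian algebra, which is automatic in finite type but must be checked for the annulus. This is exactly condition (12) of \cite{Cerulli:finitetype}, guaranteed in our situation by \cite[Theorem 36]{Labardini:potentialssurfaces} (or, in the special case that suffices here, by \cite[Lemma 2.4]{ABCP}). Once this input is in place, the statement is a direct transfer of \cite[Corollary 7.2]{DWZ:potentials2} to the tame setting, applied to the rigid representation $M_\Sigma$.
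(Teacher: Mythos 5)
The paper offers no argument for this proposition: it is quoted directly from \cite[Corollary 7.2]{DWZ:potentials2}, and the actual content of that corollary is that the $E$-invariant is preserved under mutation of decorated representations. The proposition follows because $\Sigma$, being a compatible collection, sits inside some triangulation $T'$, so $M_\Sigma$ is obtained from a direct sum of negative simple decorated representations (for which $M=0$ and hence $E=0$) by the sequence of mutations joining $T$ to $T'$, and mutation-invariance transports the vanishing. Your route is genuinely different: you want to deduce $E(M_\Sigma)=0$ from rigidity of $M_\Sigma$.

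The gap is in your first step. The claim that $E(\mathcal M)$ ``computes the dimension of the space of self-extensions'' and hence vanishes if and only if $\mathcal M$ is rigid is not what \cite{DWZ:potentials2} proves, and the implication you actually use (rigid $\Rightarrow$ $E=0$) fails for general modules over a finite-dimensional algebra. Up to decoration terms, Auslander--Reiten duality gives $E(\mathcal M)=\dim\Hom(M,\tau M)$, so $E(\mathcal M)=0$ is $\tau$-rigidity, which is strictly stronger than $\Ext^1_J(M,M)=0$ over the Jacobian algebra $J$; what \cite{DWZ:potentials2} supplies directly is only the opposite (and here useless) implication together with the mutation-invariance described above. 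Your argument can be repaired: replace ``rigid'' by ``rigid in the cluster category $\CC_Q$'' throughout, note that compatibility of the arcs of $\Sigma$ gives $\Ext^1_{\CC_Q}(M_\Sigma,M_\Sigma)=0$ by \cite{BZ:clustercatsurfaces}, and invoke the identification of $E(\mathcal M)$ with $\dim\Ext^1_{\CC_Q}(M,M)$ --- but that identification is due to Palu and Plamondon, not to \cite[Corollary 7.2]{DWZ:potentials2}, so as written the key step rests on a misattributed and, in the generality you state it, false equivalence. Your remarks on non-degeneracy and Jacobi-finiteness of the potential are sensible and echo the paper's comment before Lemma \ref{lem:cerulli}, but they address the applicability of the machinery, not this gap.
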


			We can now complete our proof.

		\subsubsection{The case when $\Gamma\in \T(C_{p,q})$}
			In this case, $x_\Gamma$ is a cluster monomial. Depending on $\Gamma$, there may or may not be any choice for a triangulation $T$ compatible with $\Gamma$, but in any case, we choose it arbitrarily. 

			If $\Gamma$ and $\Sigma$ have arcs in common, we can remove corresponding arcs from each, and $x_\Gamma$ will appear in the $T$-expansion of $x_\Sigma$ iff this is true after the cancellation. So we may assume that $\Gamma$ and $\Sigma$ have no arcs in common.

			\subsubsubsection{The case where $\Gamma \in \T(C_{p,q})$ and $\Sigma \in \T(C_{p,q})$}
				In this case, both $\Gamma$ and $\Sigma$ are cluster monomials and the argument is the same as in \cite{Cerulli:finitetype}, but we give it for completeness. 

				Suppose $\Sigma$ contains an arc $\sigma$ of $T$. By assumption, this arc does not appear in $\Gamma$. Snip the annulus open along $\sigma$. This results in either a disc or a disc together with an annulus with fewer marked points than before. Specializing the cluster variable $x_\sigma$ 
				to one and applying induction to the annulus if necessary, we deduce that $\Sigma$ and $\Gamma$ coincide in the interior of the new surface(s). This implies that $\Sigma$ and $\Gamma$ coincide except that $\Sigma$ contains an extra arc not appearing in $\Gamma$, which is obviously impossible. 

				We may therefore assume that $\Sigma$ contains no arcs of $T$. Thus, it follows from Lemma \ref{lem:cerulli} that $X^T_M$ is the sum of $\x^T_M(0)$ and of proper Laurent monomials in the cluster $T$, since $\e.\exp(x_\Sigma(\e))<0$ implies that some term of 
				$\exp(x_\Sigma(\e))<0$. 

				It thus remains to prove that $\x_\Sigma(0)$ is a proper Laurent monomial. As in \cite{Cerulli:finitetype}, this is equivalent to showing that $\g_\Sigma$ has at least one negative entry. By Proposition \ref{Ezero}, we know $0=E(M_\Sigma)=\dim M.\g_\Sigma + \dim\Hom(M_\Sigma,M_\Sigma)$. Since the second term is strictly positive (except in the degenerate case that $M_\Sigma=0$), the first is negative, implying that $\g_\Sigma$ has at least one negative term, as desired. 

				Therefore, $x_\Sigma$ is a sum of proper Laurent monomials in the cluster $T$ and $x_\Gamma$ is not a summand of the $T$-expansion of $x_\Sigma$.

			\subsubsubsection{The case where $\Gamma \in \T(C_{p,q})$ and $\Sigma \in \hat \T(C_{p,q}) \setminus \T(C_{p,q})$}
				Since $\Sigma \in \hat \T(C_{p,q}) \setminus \T(C_{p,q})$, it cannot contain any bridging edges. 

				If all the arcs of $\Sigma$ were in $T$, then, since all the terms in the expansion of the loop have bridging edges of $T$ in the denominator, the same will be true for all the terms of $x_\Sigma$, since all the arcs of $\Sigma$ are necessarily peripheral. 

				Suppose now that $\Sigma$ has a peripheral edge $\sigma$ which is not in $T$. By Lemmas \ref{lemma:techone}, \ref{lemma:periph}, and \ref{lemma:periph-loop}, the total degree of any term in the expansion of $\Sigma$ with respect to the edges of $T$ which cross $\sigma$, is negative. It follows that $x_\Gamma$ does not appear in the $T$-expansion of $x_\Sigma$. 

				This completes the proof that $\lambda_\Gamma(y)$, the coefficient of $x_\Gamma$ in the expansion of $y$, is non-negative, in the case where $x_\Gamma$ is a cluster monomial. 

		\subsubsection{The case where $\Gamma \in \hat \T(C_{p,q}) \setminus \T(C_{p,q})$}
			We write $\Gamma = \ens{z_m} \sqcup \overline \Gamma$ for some $m \geq 1$ and some $\overline \Gamma \in \T(C_{p,q})$. 

			Since $\overline\Gamma$ contains no bridging arcs, there is some freedom to choose a triangulation of $C_{p,q}$. First of all, add enough peripheral edges to the edges of $C_{p,q}$ so that the region which is not triangulated is a subannulus which has one vertex on each boundary component. Denote these vertices by $O$ and $I$. 

			Now, we are going to define a $\mathbb Z$-indexed family of triangulations. For $r \in \mathbb Z$, define $T^{(r)}$ to be the triangulation obtained by adding an edge $\alpha$ which starts at $I$, wraps $r$ times around the annulus, then goes to $O$, and let $\beta$ be the edge which starts at $O$ and wraps $-r+1$ times around the annulus before returning to $I$. These two edges are compatible and define a triangulation. 

			The $T^{(r)}$-expansion of $x_\Gamma$ is $x_{\overline\Gamma}$ times the $T^{(r)}$-expansion of $x_{z_m}$. We note that the $T^{(r)}$ expansion of 
			$x_{z_m}$ contains a term $x_\beta^m/x_\alpha^m$. We define $t_{\Gamma,r}=x_{\overline\Gamma}x_\beta^m/x_\alpha^m$.

			\subsubsubsection{The case where $\Gamma \in \hat \T(C_{p,q}) \setminus \T(C_{p,q})$ and $\Sigma \in \T(C_{p,q})$}
				If $\Sigma$ has any arcs in common with $T$, then we can proceed as in the case that $\Gamma$ is a cluster monomial, so we may assume that $\Sigma$ has no such arcs. Therefore, for any $\e \neq 0$ in $\Z^n$ such that $\Gr_{\e}(M_\Sigma) \neq \emptyset$, we know from Lemma \ref{lem:cerulli} that $\e.\exp(\x_\Sigma(\e)) <0$. 

				We claim that for any such $\e$, we have $\e.\exp(t_{\Gamma,r}) \geq 0$. Clearly $\e.\exp(x_{\overline\Gamma}) \geq 0$, since all the entries in both vectors are non-negative. It is therefore enough to show that $\e.\exp(x_\beta^m/x_\alpha^m) \geq 0$. Write $Q_{T^{(r)}}$ for the quiver associated to $T^{(r)}$, and write $\widetilde Q_{T^{(r)}}$ for its full subquiver on the vertices $v_\alpha, v_\beta$ corresponding to $\alpha$ and $\beta$. This is a Kronecker quiver with two arrows from $v_\alpha$ to $v_\beta$. For $M$ a representation of $Q_{T^{(r)}}$, write $\widetilde M$ for the representation restricted to the subquiver. 

For $\sigma \in \Sigma$, consider $\widetilde M_\sigma$.  By \cite{BZ:clustercatsurfaces}, it is indecomposable, and its dimensions at $v_\alpha,v_\beta$ count
the number of intersections of $\sigma$ with $\alpha,\beta$.  By choosing
$r$ large enough, we may guarantee that $\sigma$ crosses $\beta$ at least
as many times as $\alpha$.  This implies that $\widetilde M_\sigma$ is 
preprojective or regular, which implies that the same is true of any
of its indecomposable subobjects.  

				Consider the Grothendieck group for representations of $\widetilde Q$, which we think of as $\mathbb Z^2$, by fixing the basis $[S_\alpha],[S_\beta]$.  We also think of this as the multiplicative group of monomials in $x_\alpha,x_\beta$. Then $\exp(x_\beta^m/x_\alpha^m)=(-m,m)$, which is $-m$ times the class of the null root for the Kronecker quiver. 
Therefore, if $\dd$ is a dimension vector of an indecomposable preprojective $\widetilde Q_{T^{(r)}}$-representation, $\dd.(-m,m)=m$, while if $\dd$ is the dimension vector of a regular indecomposable $\widetilde Q_{T^{(r)}}$-representation, $\dd.\exp(x^{m}_\alpha/x^m_\beta)=0$. It follows that, for any $\e$ such that 
$\Gr_\e(M_\Sigma) \ne \emptyset$, we have $\e.\exp(t_{\Gamma,r}) \geq 0$ for 
$r$ sufficiently large.  

				This shows that the only term in the $T$-expansion of $x_\Sigma$ which could coincide with $t_{\Gamma,r}$ is $x_\Sigma(0)$. To treat the case $\e=0$, we also follow \cite{Cerulli:finitetype}. Specifically, we show that $\dim M_\Sigma.\exp(t_{\Gamma,r})\geq 0$, while $\dim M_\Sigma.\exp(\x^{T^{(r)}}_M(0))<0$.

				As in the $\e \ne 0$ case, the first of these statements reduces to showing that $\dim M.\exp(x_\beta^m/x_\alpha^m)\geq 0$, and this will hold for $r$ sufficiently large, because we can arrange $\widetilde M$ to be preinjective or regular. For the second statement, we see that $\dim M_\Sigma. \exp(\x_\Sigma(0))=\dim M_\Sigma.\g_\Sigma$. Proposition \ref{Ezero} tells us that $0 = E(M) = \dim M_\Sigma.g_\Sigma + \dim \Hom(M,M).$ 
				Thus $\dim M.\exp(\x_\Sigma(0)) < 0$ as desired.

			\subsubsubsection{The case where $\Gamma \in \hat \T(C_{p,q}) \setminus \T(C_{p,q})$ and $\Sigma\in \hat \T(C_{p,q}) \setminus \T(C_{p,q})$}
				Note that in this case $\Sigma$ does not contain any bridging arc. As before, we may assume that $\Gamma$ and $\Sigma$ do not contain any arcs in common. 

				Suppose that $\Sigma$ contains some arc which does not appear in $T$. We can therefore apply Lemma \ref{lemma:periph-loop} to conclude that any term in the expansion of $x_\Sigma$ has negative degree with respect to arcs that cross $\gamma$.  Therefore $t_{\Gamma,r}$ cannot appear as such a term, since $x_\beta^m/x_\alpha^m$ has zero degree with respect to edges of $T$ which cross $\gamma$ (either both or neither of $\alpha$ and $\beta$ cross $\gamma$), and $x_{\overline\Gamma}$ has non-negative degree with respect to arcs that cross $\gamma$. 

				On the other hand, if $\Sigma$ contains only a loop and edges from $T^{(r)}$ (disjoint from those of $\overline\Gamma$), the claim is clear. Thus this case is established. 

				This completes the proof that $\lambda_\Gamma(y)$, the coefficient of $x_\Gamma$ in the expansion of $y$, is non-negative, in the case that $\Gamma$ includes a loop. We have therefore completed the proof of \textbf{(B3)}, and thus the proof of Theorem \ref{theorem:BQatomic}. \hfill \qed

\section{An example in type $\widetilde A_{2,2}$}
	In this short section, we give an explicit description of the atomic basis in a cluster algebra $\mathcal A$ of type $\widetilde A_{2,2}$. Such a cluster algebra is associated to the annulus $C_{2,2}$ with two marked points on each boundary component. We denote by $\mathcal M$ the set of cluster monomials in $\mathcal A$. Moreover, we distinguish four particular elements in the cluster algebra $\mathcal A$ which correspond to the following curves:
	\begin{center}
		\begin{tikzpicture}[scale = .65]
		\tikzstyle{every node}=[font=\tiny]

		\draw[red] (0,1.5) .. controls (-3,-2) and (3,-2) .. (0,1.5);
		\fill[red] (1,.6) node {$\alpha_1$};

		\draw[red] (4,-1.5) .. controls (1,2) and (7,2) .. (4,-1.5);
		\fill[red] (5,-.6) node {$\alpha_2$};

		\draw[red] (8,.25) circle (.8); 
 		\fill[red] (8.9,-.5) node {$\beta_1$};

		\draw[red] (12,-.25) circle (.8); 
 		\fill[red] (12.9,.5) node {$\beta_2$};

		\foreach \x in {0,4,...,12}
		{
			\draw[thick] (\x,0) circle (.5);
			\draw[thick] (\x,0) circle (1.5);
			\fill (\x,.5) circle (.1);
			\fill (\x,-.5) circle (.1);
			\fill (\x,1.5) circle (.1);
			\fill (\x,-1.5) circle (.1);
		}
		\end{tikzpicture}
	\end{center}
	
	In terms of representation theory, these four curves correspond to the four indecomposable rigid objects which belong to tubes in the Auslander-Reiten quiver of a cluster category of type $\widetilde A_{2,2}$. As usual, for any $m \geq 1$, we denote by $z_m$ the unique loop in $C_{2,2}$ going $m$ times around the annulus. Then it follows from Theorem \ref{theorem:BQatomic} that the atomic basis of $\mathcal A$ is:
	$$\mathcal B = \mathcal M \sqcup \ens{x_{z_m}x_{\alpha_i}^ax_{\beta_j}^b \ | \ m \geq 1, \, a,b \geq 0, \, 1 \leq i,j \leq 2}.$$

\section*{Acknowledgements}
 	This paper was written while the first author was a CRM-ISM postdoctoral fellow at the Universit\'e de Sherbrooke under the supervision of Ibrahim Assem, Thomas Br\"ustle and Virginie Charette and was also partially funded by the Tomlinson's Scholarship of Bishop's University. The second author was partially supported by an NSERC Discovery Grant. He worked on the paper during stays at the Fields Institute, the Hausdorff Centre, and Bielefeld University, whom he would like to thank for excellent working conditions. Both authors would like to thank Giovanni Cerulli Irelli for helpful discussions. 

\newcommand{\etalchar}[1]{$^{#1}$}

\end{document}